\documentclass[a4paper,11pt]{article}
\usepackage{makeidx}
\usepackage{graphicx}
\usepackage{subcaption}
\usepackage[english,activeacute]{babel}
\usepackage[latin1]{inputenc}
\usepackage{amsmath}
\usepackage{amsfonts}
\usepackage{amssymb}
\usepackage{amsthm}
\usepackage{latexsym}
\usepackage{colortbl}
\usepackage{multicol}
\usepackage{hyperref}
\usepackage{xcolor}
\usepackage{booktabs}
\usepackage{orcidlink}
\usepackage{mathrsfs}
\usepackage{tikz}
\usepackage{pgfplots}
\pgfplotsset{compat=1.18}
\usepackage{mathtools}

\newtheorem{theorem}{Theorem}[section]
\newtheorem{proposition}[theorem]{Proposition}
\newtheorem{lemma}[theorem]{Lemma}

\newcommand{\N}{\mathbb{N}}

\newcommand{\C}{\mathbb{C}}
\newcommand{\PP}{\mathbb{P}}

\newcommand{\Charlier}[1]{C^{\mu}_{#1}}            % monic Charlier
\newcommand{\Meixner}[1]{M^{\gamma,\mu}_{#1}}      % monic Meixner
\newcommand{\MeixnerS}[2]{M^{#1,\mu}_{#2}}         % monic Meixner, shifted first parameter
\newcommand{\CharlierA}[1]{C^{\mu,A}_{#1}}         % modified Charlier (mass point at 0)
\newcommand{\MeixnerA}[1]{M^{\gamma,\mu,A}_{#1}}   % modified Meixner  (mass point at 0)
\newcommand{\Ker}{\mathrm{Ker}}                    % kernel polynomials
\newcommand{\dif}{\Delta}                          % forward difference
\newcommand{\nab}{\nabla}
\newcommand{\Nzero}{\mathbb{N}_{0}}                          % backward difference
\newcommand{\ff}[2]{\left[#1\right]_{#2}}          % falling factorial

\newcommand{\rmd}{\mathrm{d}}

\begin{document}
\pagestyle{plain}

\title{Mehler--Heine asymptotics for finite differences of
mass-modified Charlier and Meixner polynomials}
\author{{Anier Soria-Lorente %
\orcidlink{0000-0003-3488-3094}}$^{1}$, {Junior Michel \orcidlink{0009-0005-8450-2757}}$^{1}$ \\
%EndAName
\\
$^{1}$Department of Quantitative Methods, Loyola University,\\ Avda. de
las Universidades, 2. Dos Hermanas, Seville,\\ 41704, Andalusia, Spain.\\
asoria@uloyola.es, jmichel@uloyola.es
}

\maketitle
\begin{abstract}
	Point-mass perturbations and finite-difference operations alter the
	finite-degree structure of discrete orthogonal polynomials, but their combined
	effect at the Mehler--Heine scale is not immediate. We study the monic Charlier
	and Meixner families under a pure Uvarov modification by a fixed mass
	$A\delta_{0}$ at the endpoint of the support and determine the asymptotic
	behaviour of their forward and backward differences of arbitrary fixed order.
	Using a rank-one connection formula, we show that the perturbation coefficient
	decays factorially in the Charlier case and exponentially, with an algebraic
	prefactor, in the Meixner case. In both families this decay is faster than
	every algebraic power of $n^{-1}$ and therefore suppresses the growth produced
	by any fixed number of finite differences. Consequently, for every fixed
	$A\geq0$ and $k\in\Nzero$, the mass-modified and classical families have the
	same locally uniform Mehler--Heine limits in $\C$. Forward differences preserve
	the reciprocal-Gamma profile up to the factor $(-1)^k$, whereas backward
	differences translate the limiting argument by $k$, shifting the limiting
	zero lattice from $\Nzero$ to $k+\Nzero$. We also derive a
	first-order shift equation for the common forward limit and characterize its
	entire solution space. Complex-plane portraits and real-axis computations
	illustrate the predicted limiting profiles, the two zero lattices, and the
	asymptotic disappearance of the fixed endpoint mass. These results identify a
	scale-separation mechanism governing the fixed-order asymptotic stability of
	the Charlier and Meixner families under rank-one endpoint perturbations.
\end{abstract}

\vspace{0.3cm}

\textit{Keywords.}  Charlier polynomials, Meixner polynomials, Uvarov modification, Mehler--Heine type formulas, finite differences, asymptotic analysis.\par

\smallskip
{\footnotesize Corresponding author: Anier Soria-Lorente\par}
%\end{abstract}

%%%%%%%%%%%%%%%%%%%%%%%%%%%%%%%%%%%%%%%%%%%%%%%%%%%%%%%%%%%%%%%%%%%%%%%%%%%%%%%%%%%%%%%%%%%%%%%%%%%%%%%

%%%%%%%%%%%%%%%%%%%%%%%%%%%%%%%%%%%%%%%%%%%%%%%%%%%%%%%%%%%%%%%%%%%%%%%%%%%%%%%%%%%%%%%%%%%%%%%%%%%%%%%

\section{Introduction}

Orthogonal polynomials on discrete lattices combine explicit hypergeometric
representations, three-term recurrence relations, second-order difference
equations, Christoffel--Darboux kernels, and shift identities within a
remarkably rigid analytic structure \cite{Ismail05,KLS2010}. The Charlier and
Meixner families are canonical representatives of this theory. Besides their
role in the Askey scheme, their orthogonality measures are connected with the
Poisson and negative-binomial distributions, respectively, and their explicit
forward and backward shift relations make them particularly suitable for
testing the stability of classical properties under perturbation.

Among the asymptotic tools available for orthogonal polynomials,
Mehler--Heine formulas are especially sensitive to local structure. They
describe the large-degree behaviour in a fixed complex neighbourhood of an
endpoint and, through the zeros of the limiting function, retain fine
information about the underlying lattice. For the monic Charlier and Meixner
polynomials, Dominici derived reciprocal-Gamma Mehler--Heine limits and used
them to describe the corresponding local zero pattern
\cite{Dominici2015}. Higher-order terms in these expansions were subsequently
obtained in \cite{Dominici2018}. With the normalisation adopted in this paper,
the corresponding classical limiting functions are defined in
\eqref{eq:phis}; both are entire and have the nonnegative integer lattice
$\Nzero$ as their zero set.

The question considered here arises from the interaction of two operations
that are individually well understood but whose combined asymptotic effect is
not immediate. The first is a rank-one endpoint Uvarov modification of the
orthogonality measure. The second is repeated finite differencing. Although a
fixed endpoint mass produces an explicit finite-degree correction, repeated
differences lower the polynomial degree and introduce algebraic factors in
$n$; in the Meixner case they also shift a family parameter. It is therefore
not evident a priori whether the mass perturbation remains visible, is
amplified, or disappears in the Mehler--Heine regime.

\subsection{Related Work}

Point-mass modifications of classical discrete orthogonality measures have a
substantial structural history. For the Meixner, Krawtchouk, and Charlier
systems, the modification obtained by adding a Dirac mass at $x=0$ was studied
in detail by \'Alvarez-Nodarse, Garc\'ia, and Marcell\'an
\cite{alvareznodarse1995}. They derived representations of the resulting
orthogonal polynomials, hypergeometric descriptions, second-order difference
equations, recurrence relations, and the corresponding perturbation of the
associated tridiagonal matrices. In particular, their structural formulas
provide the rank-one representation used below in \eqref{eq:perturb}.
Consequently, the algebraic effect of the endpoint mass is well understood,
whereas its interaction with repeated finite differencing at the
Mehler--Heine scale requires a separate asymptotic analysis.

Finite-difference perturbations have also generated a substantial literature
within discrete Sobolev orthogonality. The systematic study of orthogonality
with respect to inner products involving differences goes back to Bavinck and
Bavinck--Haeringen \cite{B1995,B1995gen,B1996}, and the Meixner--Sobolev
instance was analysed in detail, including its ratio and Plancherel--Rotach
asymptotics, in \cite{agm,agmmb}; general surveys of Sobolev orthogonal
polynomials and their asymptotics are \cite{almarero,maxu,MR1990,mapepi}, with
early generalisations of the classical families in \cite{khol}. A
generating function for a nonstandard $\Delta$-Meixner--Sobolev family was
obtained in \cite{mbtpmp}, and a Mehler--Heine formula together with
consequences for the zeros was established for the corresponding first-order
setting in \cite{mbDMS}; analogous first-order results and further analytic
properties for Charlier--Sobolev families appear in \cite{domo,HS2019}, and
second-order difference equations for Sobolev-type families in \cite{Rebocho2022}, and
related second-order difference equations for quasi-orthogonal families
attached to the Hahn operator in \cite{gamamo}.
These results show that finite-difference modifications can preserve important
elements of the classical asymptotic structure, although the precise limiting
behaviour depends on the nature of the perturbation.

The reference most closely related to the present work is the study by
Costas-Santos, Soria-Lorente, and Vilaire \cite{Costas-2022} of monic Meixner
polynomials orthogonal with respect to the Sobolev-type inner product
\begin{equation*}
	\langle f,g\rangle
	=\langle \mathbf{u}^{M},fg\rangle
	+\lambda\,\mathscr{T}^{j}f(\alpha)\,\mathscr{T}^{j}g(\alpha),
	\quad \lambda>0,\ j\in\N,\ \alpha\le0,
\end{equation*}
where $\mathbf{u}^{M}$ is the Meixner functional and $\mathscr{T}$ is either
the forward operator $\dif$ or the backward operator $\nab$. Among many
structural results, they obtain a connection formula, a hypergeometric
representation, ladder operators, a second-order difference equation, a
$(2j+3)$-term recurrence relation, and, in the special case $\alpha=0$, a
Mehler--Heine formula whose limit is the classical reciprocal-Gamma function
\cite[Thm.~5]{Costas-2022}. That setting is, however, structurally different
from the one considered here. In \cite{Costas-2022} the higher-order
difference $\mathscr{T}^{j}$ forms part of the inner product itself, so the
perturbation acts through the associated discrete kernels
$\mathscr{K}^{(j,j)}$ and modifies the orthogonality relation. In the present
paper the modification of the measure is the pure Uvarov mass $A\delta_{0}$,
with no difference operator inside the inner product; the resulting sequence
is classical-plus-rank-one through \eqref{eq:perturb}, and the operators
$\dif^{k}$ and $\nab^{k}$ are applied \emph{afterwards}, externally, to the
already-modified polynomials. Consequently, our arbitrary-order forward and
backward Mehler--Heine formulas are not a reformulation of the single-node
Sobolev result: they describe the asymptotics of external finite differences
of Uvarov-modified polynomials and exhibit a forward--backward asymmetry
(limiting zero set $\Nzero$ versus $k+\Nzero$) that has no counterpart in the
$\alpha=0$ Sobolev Mehler--Heine formula, where the limit is the classical
profile itself.

More recently, higher-order discrete Sobolev systems have revealed that the
location and form of the perturbation can leave a visible signature in the
limiting function. In particular, for Sobolev-type Charlier and Meixner
families involving arbitrary-order forward differences evaluated at an
exterior point $\alpha<0$, the limiting function contains the factor
$z-\alpha$, producing an additional asymptotic zero at the mass point
\cite{SoriaMichel2026}. The pure endpoint Uvarov modification considered in
the present paper is structurally different from both of these Sobolev
settings: the perturbation is the fixed mass $A\delta_0$ located at the
endpoint of the support, while the forward and backward differences are
subsequently applied to the resulting modified polynomial sequence. Whereas an
exterior Sobolev mass leaves its position encoded in the limit through the
factor $z-\alpha$, and a single-node difference-Sobolev inner product keeps the
classical profile at $\alpha=0$, the endpoint Uvarov mass studied here
disappears entirely from every fixed-order limit, for each fixed $A\ge0$ and
fixed $k\in\Nzero$. This distinction provides a
natural setting in which to determine precisely which features of the
perturbation survive in the local large-degree limit.

We should be explicit about the relationship between the present work and the
two most closely related references, \cite{Costas-2022} and
\cite{SoriaMichel2026}, delimiting what is genuinely new here from what is a
direct consequence of already-known identities. The reference
\cite{SoriaMichel2026} is a recent preprint that shares authors with the present
paper; to avoid any impression of fragmentation of closely related results, we
state the distinction sharply. In \cite{SoriaMichel2026} the difference operator
sits inside a Sobolev inner product evaluated at an \emph{exterior} node
$\alpha<0$, and the mass leaves a permanent trace in the limit through the factor
$z-\alpha$; in \cite{Costas-2022} the higher-order difference is again part of
the inner product, at the single node $\alpha=0$, and the limit is the classical
profile. In the present paper there is no difference operator inside the inner
product at all: the measure is modified by the pure endpoint Uvarov mass
$A\delta_0$, and the differences $\dif^{k}$, $\nab^{k}$ are applied
\emph{externally} to the already-modified polynomials. The three settings, and
their limiting functions, are contrasted in Table~\ref{tab:comparison}.

\begin{table}[h]
	\centering
	\caption{Comparison of the present work with the two most closely related
		references. ``In/out'' indicates whether the difference operator acts
		inside the inner product or is applied externally to the modified
		polynomials. Here $\Nzero$ and $k+\Nzero$ denote the limiting zero
		lattices of the forward and backward differences, respectively.}
	\label{tab:comparison}
	\renewcommand{\arraystretch}{1.4}
	\begin{tabular}{lccc}
		\toprule
		& \cite{Costas-2022} & \cite{SoriaMichel2026} & This paper\\
		\midrule
		Perturbation type & Sobolev-type & Sobolev-type & pure Uvarov\\
		Mass location & node $\alpha=0$ & exterior $\alpha<0$ & endpoint $x=0$\\
		Operator in/out of $\langle\cdot,\cdot\rangle$ & inside & inside & outside\\
		Difference order & fixed $j$ & arbitrary fixed & arbitrary fixed $k$\\
		Trace of mass in limit & none & factor $z-\alpha$ & none\\
		Limiting function & classical profile & classical $\times(z-\alpha)$ &
		$(-1)^k\varphi(z)$, $(-1)^k\varphi(z-k)$\\
		\bottomrule
	\end{tabular}
\end{table}

Within this delimitation, the specific contribution of the present paper is
narrow but clean, and it is worth separating its conceptual part from what is an
immediate consequence of classical identities. The conceptual novelty is the
identification of the scale-separation mechanism: once the rank-one
representation \eqref{eq:perturb} is available, the mass correction is a single
scalar multiple of a backward difference, and the factorial (Charlier) or
exponential (Meixner) decay of that scalar, established in Lemma~\ref{lem:coeff},
is placed strictly below the algebraic growth generated by any fixed number of
external differences. This is what makes the endpoint mass invisible in every
fixed-order limit and is genuinely specific to the endpoint Uvarov setting, as
the contrast with \cite{SoriaMichel2026} shows. By contrast, the core of the
proofs in Sections~\ref{sec:mhfinite}--\ref{sec:mhmass} is, once
\eqref{eq:perturb} is known, an exact combination of the classical
forward/backward shift identities \eqref{eq:fwdCharlier}--\eqref{eq:bwdMeixner}
with the decay of $D_n$ and $B_n$; we do not claim novelty for that mechanical
part. In particular, the first-order functional equation of
Section~\ref{sec:functional} is \emph{not} a new asymptotic reduction of the
second-order difference equation satisfied by the finite-degree modified
polynomials: it follows directly from the explicit reciprocal-Gamma
representation of the limit, as we make clear there.

\subsection{Our Contributions}

Let $\{P_n\}_{n\geq0}$ denote either the monic Charlier or the monic Meixner
family, and let $\{P_n^A\}_{n\geq0}$ be the corresponding sequence obtained
after adding a fixed nonnegative mass $A\delta_0$ at the origin. We determine
the Mehler--Heine behaviour of the $k$th forward and backward finite
differences of $P_n^A$ for arbitrary fixed $k\in\Nzero$ as $n\to\infty$. The
analysis is carried out locally uniformly in $\C$.

The key mechanism is the one-term rank-one connection formula
\eqref{eq:perturb}, which expresses the mass-modified polynomial as the
corresponding classical polynomial plus a scalar multiple of its first
backward difference. This reduces the problem to a comparison between the
growth introduced by repeated finite differencing and the decay of the two
perturbation coefficients. Their precise asymptotic behaviour is established
in \eqref{eq:Dnasy} and \eqref{eq:Bnasy}. The Charlier coefficient has
factorial decay, whereas the Meixner coefficient has exponential decay with
an algebraic prefactor. In both cases the decay is faster than every
algebraic power of $n^{-1}$. This scale separation is the central mechanism
behind the asymptotic stability proved in the paper: it suppresses the
algebraic growth produced by any prescribed fixed number of finite
differences.

Using the reciprocal-Gamma normalisation $\theta_n$ introduced in
\eqref{eq:theta}, we prove that the mass-modified and classical families have
exactly the same Mehler--Heine limits at every fixed difference order.
More precisely, the arbitrary-order forward limits are given by
\eqref{eq:mhdiffC}--\eqref{eq:mhdiffM}, whereas the corresponding backward
limits are given by
\eqref{eq:mhnablakC}--\eqref{eq:mhnablakM}. Thus, for every fixed
$A\geq0$ and $k\in\Nzero$, the endpoint mass disappears completely from the
limiting functions even though it changes the finite-degree polynomials when
$A>0$. We stress that both $A$ and $k$ are held fixed throughout; the
double-scaling regimes $A=A_n$ or $k=k_n$, in which the scale separation may
break down, are not treated here and are left as open problems.

This invariance is accompanied by a genuine forward--backward asymmetry.
Forward differencing preserves the argument of the classical reciprocal-Gamma
profile and contributes only the factor $(-1)^k$. Backward differencing,
however, translates the argument by $k$. Consequently, the limiting zero
lattice moves from $\Nzero$ to $k+\Nzero$. The shifted reciprocal-Gamma
function simultaneously provides the analytic continuation through the
apparent singularities arising in the quotient representation associated
with the falling factorial. Hence the mass perturbation becomes
asymptotically invisible, whereas the direction of the finite-difference
operator remains detectable in both the limiting function and its zero
geometry.

We also record, as a complementary observation rather than a further main
result, a first-order functional equation satisfied by the common forward
Mehler--Heine limit. We show that the limit satisfies the shift equations
\eqref{eq:reducedC} and \eqref{eq:reducedM} for the Charlier and Meixner
families, respectively, and characterize their entire solution spaces. In each
case the general entire solution consists of the corresponding reciprocal-Gamma
factor multiplied by an arbitrary one-periodic entire function, and the
Mehler--Heine normalisation selects the particular constant periodic factor
associated with the classical family. We stress that this equation is a direct
consequence of the explicit reciprocal-Gamma form of the limit and of the Gamma
functional equation; it is neither a new asymptotic reduction of the
second-order difference equation of the finite-degree polynomials nor dependent
on the Uvarov structure, since the limiting function is the classical one.

The resulting picture identifies a precise stability mechanism for
rank-one endpoint perturbations. The finite-degree mass correction is not
simply negligible: its factorial or exponential decay places it on a scale
strictly below the algebraic growth generated by every fixed-order
difference operator. This explains why the mass disappears from all the
fixed-order limits considered here. At the same time, comparison with the
exterior-point Sobolev setting of \cite{SoriaMichel2026} shows that this
invisibility is not a generic consequence of discrete perturbation: an
exterior Sobolev mass leaves its position encoded in the limiting function,
whereas the endpoint Uvarov mass studied here does not.

The remainder of the paper is organised as follows. We first collect the
classical Charlier and Meixner data, the reciprocal-Gamma normalisation, and
the endpoint Uvarov connection formula. We then establish the finite-difference
Mehler--Heine formulas for the classical families and transfer them to the
mass-modified sequences through the decay of the perturbation coefficients.
The arbitrary-order forward and backward limits are derived next, followed by
the first-order functional equation and the characterization of its entire
solutions. The final part of the paper presents the numerical illustrations
and summarizes the main conclusions.

%%%%%%%%%%%%%%%%%%%%%%%%%%%%%%%%%%%%%%%%%%%%%%%%%%%%%%%%%%%%%%%%%%%%%%%%%%%%%%%%%%%
%=======================================================================
\section{Preliminaries for Charlier and Meixner polynomials}
\label{sec:prelim}

\subsection{Classical Charlier and Meixner polynomials}
We use the monic normalisation of both families, with the data collected
in Table~\ref{tab:families}; these are the standard parameters of the
classical discrete polynomials \cite{KLS2010,nikiforov1991classical}. For Charlier the
parameter is $\mu>0$; for Meixner the parameters satisfy $\gamma>0$ and
$0<\mu<1$. The classical facts we use are collected in the next proposition; all are
standard and are recorded only to fix normalisations
\cite{Ismail05,KLS2010,nikiforov1991classical}.

\begin{table}[h]
	\centering
	\caption{Data of the monic Charlier $\Charlier n(x)$ and Meixner
		$\Meixner n(x)$ polynomials used in this paper.  Here $\alpha_{n}$,
		$\beta_{n}$ are the recurrence coefficients, $\sigma$, $\tau$,
		$\lambda_{n}$ the coefficients of the difference equation, and
		$d_{n}^{2}$ the squared norm.}
	\label{tab:families}
	\renewcommand{\arraystretch}{1.9}
	\begin{tabular}{ccc}
		\toprule
		$P_{n}(x)$ & $\Charlier n(x)$, $\mu>0$ &
		$\Meixner n(x)$, $\gamma>0$, $0<\mu<1$\\
		\midrule
		$\alpha_{n}$ & $n+\mu$ & $\dfrac{n(1+\mu)+\mu\gamma}{1-\mu}$\\
		$\beta_{n}$ & $n\mu$ & $\dfrac{n\mu(n-1+\gamma)}{(\mu-1)^{2}}$\\
		$\sigma(x)$ & $x$ & $x$\\
		$\tau(x)$ & $\mu-x$ & $(\mu-1)x+\mu\gamma$\\
		$\lambda_{n}$ & $n$ & $(1-\mu)n$\\
		$d_{n}^{2}$ & $n!\,\mu^{n}$ &
		$\dfrac{n!\,(\gamma)_{n}\,\mu^{n}}{(1-\mu)^{\gamma+2n}}$\\
		\bottomrule
	\end{tabular}
\end{table}

Throughout this work, $\dif$ and $\nab$ denote the forward and backward
difference operators, respectively, acting on a function $f$ according to
\begin{equation*}\label{eq:diffops}
	\dif f(x)=f(x+1)-f(x),
	\quad
	\nab f(x)=f(x)-f(x-1).
\end{equation*}
The two operators are related through the elementary identities
\begin{equation*}\label{eq:diffrel}
	\nab f(x)=\dif f(x-1),
	\quad
	\dif f(x)=\nab f(x+1),
\end{equation*}
so that every statement concerning one operator admits a counterpart for the
other after a suitable shift of the argument.

Higher-order differences are defined recursively by
\begin{equation*}\label{forWop}
	\dif^{k}f(x)=\dif\![\dif^{k-1}f(x)],
	\quad
	\nab^{k}f(x)=\nab\![\nab^{k-1}f(x)],
	\quad k\geq 1,
\end{equation*}
with the convention $\dif^{0}=\nab^{0}=\mathrm{Id}$. Both operators are linear,
that is, for $\alpha,\beta\in\C$,
\begin{equation}\label{eq:linearity}
	\dif\!\left[\alpha f(x)+\beta g(x)\right]
	=\alpha\,\dif f(x)+\beta\,\dif g(x),
	\quad
	\nab\!\left[\alpha f(x)+\beta g(x)\right]
	=\alpha\,\nab f(x)+\beta\,\nab g(x).
\end{equation}
On the other hand, for $k\in\mathbb{N}_0$, the symbol $[\cdot]_k$ stands for the falling factorial
of order $k$,
\begin{equation}\label{eq:falling}
	[x]_k=
	\begin{cases}
		1, & k=0,\\[2mm]
		\displaystyle\prod_{j=0}^{k-1}(x-j), & k\geq 1,
	\end{cases}
\end{equation}
which plays the role of the monomial $x^k$ in the discrete setting, since the
difference operators act on it as
\begin{equation*}\label{eq:diffpower}
	\dif\,[x]_k=k\,[x]_{k-1},
	\quad
	\nab\,[x]_k=k\,[x-1]_{k-1},
	\quad k\geq 1.
\end{equation*}
The rising factorial, or Pochhammer symbol, of order $k$ is denoted by $(x)_k$
and is related to the falling factorial by
\begin{equation*}\label{eq:pochhammer}
	(x)_k=(-1)^k\,[-x]_k
	=\frac{\Gamma(x+k)}{\Gamma(x)},
\end{equation*}
where $\Gamma$ is the Gamma function. Finally, $\PP$ denotes the linear space of
polynomials with real coefficients.

%============================================================
%  Proposition 1: classical structural properties
%============================================================
\begin{proposition}\label{prop:classical}
	Let $\{P_{n}\}_{n\ge0}$ be the monic Charlier or Meixner family
	determined by Table~\ref{tab:families}. Then:
	\begin{enumerate}
		\item \emph{Three-term recurrence.} With $P_{-1}\equiv0$, $P_{0}\equiv1$,
		\begin{equation*}\label{eq:ttrr}
			xP_{n}(x)=P_{n+1}(x)+\alpha_{n}P_{n}(x)+\beta_{n}P_{n-1}(x),\quad n\ge0.
		\end{equation*}
		
		\item \emph{Second-order difference equation of hypergeometric type.}
		\begin{equation}\label{eq:sode}
			\sigma(x)\,\dif\nab P_{n}(x)+\tau(x)\,\dif P_{n}(x)
			+\lambda_{n}P_{n}(x)=0,\quad n\ge0.
		\end{equation}
		
		\item \emph{Values at the origin.}
		\begin{equation*}\label{eq:origin}
			\Charlier n(0)=(-\mu)^{n},
			\quad
			\Meixner n(0)=\frac{\mu^{n}}{(\mu-1)^{n}}\,
			\frac{\Gamma(n+\gamma)}{\Gamma(\gamma)}.
		\end{equation*}
		
		\item \emph{Forward-shift identities.} For $0\le k\le n$,
		\begin{eqnarray}
			\dif^{k}\Charlier n(x)&=&\ff{n}{k}\,\Charlier{n-k}(x),
			\label{eq:fwdCharlier}\\
			\dif^{k}\Meixner n(x)&=&\ff{n}{k}\,\MeixnerS{\gamma+k}{n-k}(x),
			\label{eq:fwdMeixner}
		\end{eqnarray}
		where $\MeixnerS{\gamma+k}{n-k}$ is the monic Meixner polynomial of
		degree $n-k$ with first parameter shifted to $\gamma+k$.
		
		\item \emph{Backward-shift identities.} For $0\le k\le n$,
		\begin{eqnarray}
			\nab^{k}\Charlier n(x)&=&\ff{n}{k}\,\Charlier{n-k}(x-k),
			\label{eq:bwdCharlier}\\
			\nab^{k}\Meixner n(x)&=&\ff{n}{k}\,\MeixnerS{\gamma+k}{n-k}(x-k),
			\label{eq:bwdMeixner}
		\end{eqnarray}
		where $\MeixnerS{\gamma+k}{n-k}$ is the monic Meixner polynomial of
		degree $n-k$ with first parameter shifted to $\gamma+k$.
	\end{enumerate}
\end{proposition}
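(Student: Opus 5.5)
The plan is to derive every item from the explicit hypergeometric representations together with the Pearson (weight) structure, and to fix normalisations by comparing leading coefficients. I would use the standard representations
\[
\Charlier n(x)=(-\mu)^{n}\,{}_2F_0\!\left(-n,-x;\,;-\tfrac{1}{\mu}\right),\qquad
\Meixner n(x)=\frac{\mu^{n}(\gamma)_n}{(\mu-1)^{n}}\,{}_2F_1\!\left(-n,-x;\gamma;1-\tfrac{1}{\mu}\right).
\]
In each case the prefactor is the one that makes the polynomial monic: the coefficient of $x^n$ comes only from the $j=n$ term, through $(-x)_n=(-1)^n x^n+\cdots$. Item 3 is then immediate, since $(-x)_j$ vanishes at $x=0$ for $j\ge1$ and only the constant prefactor survives.

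For item 4 I would apply $\dif$ termwise, using $\dif(-x)_j=-j(-x-1)_{j-1}$. After reindexing $j\mapsto j+1$, this gives $\dif {}_2F_0(-n,-x;;c)=-nc\,{}_2F_0(-n+1,-x;;c)$. In the Meixner case the same computation gives $\dif\,{}_2F_1(-n,-x;\gamma;c)=-\tfrac{nc}{\gamma}\,{}_2F_1(-n+1,-x;\gamma+1;c)$. The constant prefactors then combine to $n$ times the monic polynomial of degree $n-1$, with $\gamma$ replaced by $\gamma+1$ for Meixner. A cleaner check is available: both sides are polynomials of degree $n-1$ with leading coefficient $n$, so one only needs to know that $\dif P_n$ lies in the correct family. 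Induction on $k$ then yields \eqref{eq:fwdCharlier}–\eqref{eq:fwdMeixner}, with $[n]_k=n(n-1)\cdots(n-k+1)$. Item 5 follows from the relation $\nab f(x)=\dif f(x-1)$. Since the operators commute with shifts, $\nab^k f(x)=\dif^k f(x-k)$, and substituting into item 4 gives \eqref{eq:bwdCharlier}–\eqref{eq:bwdMeixner} directly.

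Item 2 is the standard Nikiforov–Uvarov fact. The weight $\rho$ (Poisson, respectively negative binomial) satisfies the Pearson equation $\dif(\sigma\rho)=\tau\rho$, and the operator $\sigma\dif\nab+\tau\dif$ is symmetric on $\PP$ in $L^2(\rho)$. This symmetry uses $\sigma(0)=0$, which kills the boundary term at $x=0$. The operator also maps degree-$n$ polynomials to degree-$n$ polynomials. Hence it is diagonal in the orthogonal basis, with eigenvalue $-\lambda_n$ read off from the $x^n$ coefficient: $n\tau'+\tfrac{n(n-1)}{2}\sigma''=-\lambda_n$. With $\sigma''=0$ this gives $\lambda_n=-n\tau'$, that is, $n$ and $(1-\mu)n$ respectively.

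Item 1 follows from orthogonality: $xP_n$ is orthogonal to all polynomials of degree at most $n-2$, which forces the three-term form. The coefficients are obtained as $\beta_n=d_n^2/d_{n-1}^2$, and $\alpha_n$ is read off from the subleading coefficients, or by matching the expansion at $x=0$ using item 3. For example, in the Charlier case, evaluating the recurrence at $0$ gives $0=(-\mu)^{n+1}+\alpha_n(-\mu)^n+\beta_n(-\mu)^{n-1}$. This yields $\alpha_n=\mu+\beta_n/\mu=n+\mu$, consistent with $\beta_n=n\mu$.

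The norms $d_n^2$ come from the generating function or from the standard tables. The only real obstacle is bookkeeping: keeping the monic prefactors, particularly $(\mu-1)^{-n}$ versus $(1-\mu)^{-n}$ in the Meixner case, consistent across items 3 and 4.
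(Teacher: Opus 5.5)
The paper does not prove this proposition. It lists the five items as standard facts, recorded only to fix normalisations, and refers to the monographs of Ismail, Koekoek--Lesky--Swarttouw and Nikiforov--Suslov--Uvarov. Your sketch is the textbook derivation behind those citations:
\begin{itemize}
\item hypergeometric series for items 3--5;
\item the Pearson equation $\dif(\sigma\rho)=\tau\rho$ together with $\sigma(0)=0$ for item 2;
\item orthogonality plus evaluation at $x=0$ for item 1.
\end{itemize}
Its structure is sound. Your prefactors do make both families monic, $\lambda_n=-n\tau'$ gives $n$ and $(1-\mu)n$, and evaluating the recurrence at the origin reproduces both $\alpha_n$ entries of Table~\ref{tab:families}. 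What it adds to the paper is self-containedness. In particular, it makes clear that \eqref{eq:bwdCharlier}--\eqref{eq:bwdMeixner} are just \eqref{eq:fwdCharlier}--\eqref{eq:fwdMeixner} composed with $x\mapsto x-k$, via $\nab^k f(x)=\dif^k f(x-k)$.

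There is one computational slip, and it sits exactly where the forward/backward distinction lives. The termwise rule is $\dif(-x)_j=-j\,(-x)_{j-1}$, not $-j\,(-x-1)_{j-1}$. For $j=2$ one has $\dif(x^2-x)=2x$, whereas $-2(-x-1)_1=2x+2$. With the correct rule, reindexing gives
\[
\dif\,{}_2F_0(-n,-x;\,;c)=nc\,{}_2F_0(-n+1,-x;\,;c),\qquad
\dif\,{}_2F_1(-n,-x;\gamma;c)=\frac{nc}{\gamma}\,{}_2F_1(-n+1,-x;\gamma+1;c),
\]
with a plus sign. For $n=1$, ${}_2F_0(-1,-x;\,;c)=1+cx$ has difference $c$.

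With your minus signs, the prefactors would combine to $-n$ times the monic polynomial, contradicting your own leading-coefficient check. And if your Pochhammer rule were really carried through, the reindexed series would have second parameter $-x-1$. That gives $\dif P_n(x)=nP_{n-1}(x+1)$ and cancels the shift in item 5 altogether.

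Your conclusions survive, because you wrote the correct argument $-x$ in the result and the leading-coefficient comparison pins down the constant. Still, the argument shift is the bookkeeping that matters most here, more than $(\mu-1)^{-n}$ versus $(1-\mu)^{-n}$. It is what later produces the translated limit $\varphi(z-k)$ and the zero lattice $k+\Nzero$.
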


%============================================================
%  Proposition 2: kernel polynomials
%============================================================
\begin{proposition}\label{prop:kernels}
	Let $\{P_{n}\}_{n\ge0}$ be the monic Charlier or Meixner family of
	Table~\ref{tab:families}, and let
	\begin{equation}\label{eq:kerdef}
		\Ker_{n-1}(x,y)=\sum_{m=0}^{n-1}\frac{P_{m}(x)P_{m}(y)}{d_{m}^{2}},
	\end{equation}
	denote the associated kernel polynomials. Then:
	\begin{enumerate}
		\item \emph{Closed forms at $y=0$.} Evaluating the Christoffel--Darboux
		kernel at $y=0$ and using the origin and backward-shift identities of
		Proposition~\ref{prop:classical} gives the closed forms
		\begin{align}
			\Ker^{C}_{n-1}(x,0)
			&=\frac{(-1)^{n-1}}{n!}\,\nab\Charlier n(x),
			\label{eq:kerC}\\
			\Ker^{M}_{n-1}(x,0)
			&=\frac{(-1)^{n-1}(1-\mu)^{n+\gamma-1}}{n!}\,\nab\Meixner n(x).
			\label{eq:kerM}
		\end{align}
		
		\item \emph{Numerical values at $x=0$.} Evaluated at $x=0$, the kernels take
		the values
		\begin{align}
			\Ker^{C}_{n-1}(0,0)&=\sum_{m=0}^{n-1}\frac{\mu^{m}}{m!},
			\label{eq:kerC00}\\
			\Ker^{M}_{n-1}(0,0)
			&=(1-\mu)^{\gamma}\sum_{m=0}^{n-1}\frac{(\gamma)_{m}\,\mu^{m}}{m!}.
			\label{eq:kerM00}
		\end{align}
		
		\item \emph{Limiting values.} Since
		\begin{equation*}
			\sum_{m\ge0}\frac{\mu^{m}}{m!}=e^{\mu}\quad\mbox{and}\quad\sum_{m\ge0}(\gamma)_{m}\frac{\mu^{m}}{m!}=(1-\mu)^{-\gamma},
		\end{equation*}
		the limiting kernel
		values are
		\begin{equation*}\label{eq:kerlimit}
			\Ker^{C}_{\infty}(0,0)=\lim_{n\to\infty}\Ker^{C}_{n-1}(0,0)=e^{\mu},
			\quad
			\Ker^{M}_{\infty}(0,0)=\lim_{n\to\infty}\Ker^{M}_{n-1}(0,0)=1.
		\end{equation*}
	\end{enumerate}
	These two limits play a decisive role below: they are the exact constants
	that survive in the amplitude of the endpoint mass correction.
\end{proposition}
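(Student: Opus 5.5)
The plan is to establish item~1 by induction on $n$, reducing it to a single contiguous relation per family, and then to obtain items~2 and~3 by direct substitution.

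\textbf{Setting up the induction.} Write $c_n$ for the prefactor in \eqref{eq:kerC}, resp.\ \eqref{eq:kerM}:
\begin{equation*}
c_n^{C}=\frac{(-1)^{n-1}}{n!},\qquad c_n^{M}=\frac{(-1)^{n-1}(1-\mu)^{n+\gamma-1}}{n!}.
\end{equation*}
For $n=1$ both sides are constants. Indeed $\Ker_0(x,0)=1/d_0^2$, which is $1$ for Charlier and $(1-\mu)^{\gamma}$ for Meixner. On the other side $\nab P_1(x)=1$ since $P_1$ is monic, so $c_1\nab P_1=c_1$ matches in both cases. By \eqref{eq:kerdef}, the inductive step is the identity
\begin{equation*}
c_{n+1}\nab P_{n+1}(x)-c_n\nab P_n(x)=\frac{P_n(x)P_n(0)}{d_n^2}.
\end{equation*}
I would rewrite the left side with the backward-shift formulas \eqref{eq:bwdCharlier}--\eqref{eq:bwdMeixner} for $k=1$. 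This turns it into a combination of the shifted-argument polynomials $P_n(x-1)$ and $P_{n-1}(x-1)$; in the Meixner case these carry the shifted parameter $\gamma+1$. The right side is made explicit using the origin values of Proposition~\ref{prop:classical} and $d_n^2$ from Table~\ref{tab:families}.

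\textbf{Charlier case.} Here $P_n(0)/d_n^2=(-1)^n/n!$, and after cancelling $(-1)^n/n!$ the required identity becomes
\begin{equation*}
\Charlier n(x)=\Charlier n(x-1)+n\,\Charlier{n-1}(x-1).
\end{equation*}
This is just $\nab\Charlier n=n\,\Charlier{n-1}(x-1)$, i.e.\ \eqref{eq:bwdCharlier} with $k=1$ again, so the Charlier case closes immediately. As an alternative route I would use the Christoffel--Darboux form at $y=0$. That form needs $x\,\Charlier{n-1}(x-1)=\Charlier n(x)+\mu\,\Charlier{n-1}(x)$, which can be read off from the generating function $e^{-\mu t}(1+t)^{x}=\sum_n \Charlier n(x)t^n/n!$ after converting to monic normalisation.

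\textbf{Meixner case.} The analogous step mixes $\Meixner n$ with $\MeixnerS{\gamma+1}{n}$ and $\MeixnerS{\gamma+1}{n-1}$, so it is a genuine contiguous relation in $\gamma$. I would verify it with the generating function
\begin{equation*}
(1-t/c)^{x}(1-t)^{-x-\beta}=\sum_n \frac{(\beta)_n}{n!}M_n(x;\beta,c)\,t^n .
\end{equation*}
The monic normalisation is related to $M_n(x;\beta,c)$ by the factor $\bigl(\mu/(\mu-1)\bigr)^n(\gamma)_n$ implied by $\Meixner n(0)$. Multiplying the generating function by $(1-t)^{-1}$ raises $\beta$ by one, and shifting $x\mapsto x-1$ multiplies it by the ratio $(1-t)/(1-t/c)$. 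Comparing coefficients of $t^n$ should then give the needed three-term identity. Bookkeeping the powers of $(1-\mu)$ and $\mu/(\mu-1)$ so that they collapse exactly to $c_{n+1}^{M}$ and $c_n^{M}$ is the main obstacle. If this gets messy, I would instead check the identity on the hypergeometric ${}_2F_1$ coefficients term by term.

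\textbf{Items 2 and 3.} Item~2 follows by setting $x=0$ directly in \eqref{eq:kerdef}, where $P_m(0)^2/d_m^2$ equals $\mu^m/m!$ for Charlier and $(1-\mu)^{\gamma}(\gamma)_m\mu^m/m!$ for Meixner. Item~3 then follows from the exponential series and the binomial series $\sum_m(\gamma)_m\mu^m/m!=(1-\mu)^{-\gamma}$, which converges because $0<\mu<1$.
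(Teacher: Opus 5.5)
Your proposal is correct, and for item~1 it takes a different route from the paper; items~2 and~3 are done exactly as in the paper, by substitution.

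The paper obtains item~1 directly from the Christoffel--Darboux formula at $y=0$, inserting the origin values:
\[
\Ker_{n-1}(x,0)=\frac{P_n(x)P_{n-1}(0)-P_{n-1}(x)P_n(0)}{d_{n-1}^{2}\,x}.
\]
To recognise this quotient as a multiple of $\nab P_n(x)$, one still needs a mixed structure relation. For Charlier it is your ``alternative'' identity $x\,\Charlier{n-1}(x-1)=\Charlier n(x)+\mu\,\Charlier{n-1}(x)$. For Meixner it is
\[
x\,\MeixnerS{\gamma+1}{n-1}(x-1)=\Meixner n(x)+\frac{\mu(\gamma+n-1)}{1-\mu}\,\Meixner{n-1}(x).
\]
Neither identity is among the origin and backward-shift identities the paper invokes.

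You instead telescope the defining sum \eqref{eq:kerdef}, so you only need a relation of first order in $n$. For Charlier this is \eqref{eq:bwdCharlier} itself. For Meixner it is the contiguous relation
\[
(1-\mu)\Meixner n(x)=(1-\mu)\MeixnerS{\gamma+1}{n}(x-1)+n\,\MeixnerS{\gamma+1}{n-1}(x-1).
\]
You left this step as ``should give'', but it closes in one line:
\begin{itemize}
\item With $G_\gamma(x,t)=(1-t/\mu)^{x}(1-t)^{-x-\gamma}$ one has $G_\gamma(x,t)=(1-t/\mu)\,G_{\gamma+1}(x-1,t)$.
\item In the variable $s=(\mu-1)t/\mu$, the monic expansion is $G_\gamma(x,t)=\sum_n\Meixner n(x)\,s^n/n!$, and $1-t/\mu=1+s/(1-\mu)$.
\item Comparing coefficients of $s^n$ gives the displayed identity, so the bookkeeping obstacle you anticipated does not arise.
\end{itemize}

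The trade-off is this. The Christoffel--Darboux route is non-inductive, but it hides an extra structure relation. Your telescoping route costs a base case and an induction, but it uses only the cheapest identity in each family, and both identities can be read off the generating function.

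Both routes are shortcuts for the underlying structural fact. For $\deg p\le n-2$ the reproducing property gives $\langle \Ker_{n-1}(\cdot,0),x\,p\rangle=0$, so $\Ker_{n-1}(\cdot,0)$ is orthogonal with respect to $x\,w(x)$. After $x\mapsto x+1$, this weight is a constant multiple of the same family's weight, with $\gamma\mapsto\gamma+1$ in the Meixner case and no parameter shift for Charlier. Hence $\Ker^{M}_{n-1}(x,0)\propto\MeixnerS{\gamma+1}{n-1}(x-1)\propto\nab\Meixner n(x)$, and only the leading coefficients need to be matched.
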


%============================================================
%  Mehler--Heine setup: notation and classical asymptotics
%============================================================
We use the reciprocal-Gamma normalisation of \cite{Dominici2015,mbDMS}. For
$z\in\C$ set
\begin{equation}\label{eq:theta}
	\theta_{n}(z)=\frac{\kappa^{n}}{\Gamma(n-z)},
	\quad
	\kappa=
	\begin{cases}
		-1,&\text{Charlier},\\
		\mu-1,&\text{Meixner},
	\end{cases}
\end{equation}
and write $\varphi_{C}$, $\varphi_{M}$ for the classical limit functions
\begin{equation}\label{eq:phis}
	\varphi_{C}(z)=\frac{e^{\mu}}{\Gamma(-z)},
	\quad
	\varphi_{M}(z)=\frac{1}{(1-\mu)^{\gamma+z}\,\Gamma(-z)}.
\end{equation}
Both $\varphi_{C}$ and $\varphi_{M}$ are entire, being nonvanishing multiples
of $1/\Gamma(-z)$, and their zero set is exactly $\N_{0}$; likewise
$\theta_{n}$ is entire in $z$, with zeros at $z=n,n+1,\dots$ and no poles.

\begin{proposition}[Classical Mehler--Heine type formulas]\label{prop:MH}
	With the notation of \eqref{eq:theta}--\eqref{eq:phis}, locally uniformly in
	$\C$ \cite{Dominici2015,mbDMS},
	\begin{equation}\label{eq:classicalMH}
		\lim_{n\to\infty}\theta_{n}(z)\,\Charlier n(z)=\varphi_{C}(z),
		\quad
		\lim_{n\to\infty}\theta_{n}(z)\,\Meixner n(z)=\varphi_{M}(z).
	\end{equation}
\end{proposition}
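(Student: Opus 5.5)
Although Proposition~\ref{prop:MH} is quoted from \cite{Dominici2015,mbDMS}, I would prove it directly from the explicit finite sums for the monic families. For Charlier I would start from
\begin{equation*}
\Charlier n(x)=\sum_{m=0}^{n}\binom{n}{m}(-\mu)^{m}\,[x]_{n-m}.
\end{equation*}
This expansion follows from $\dif[x]_j=j[x]_{j-1}$, the forward shift \eqref{eq:fwdCharlier} and the value $\Charlier n(0)=(-\mu)^n$. I would then rewrite the falling factorial with the Gamma function. From $[z]_{n-m}=\Gamma(z+1)/\Gamma(z+1-n+m)$ and the reflection formula one gets
\begin{equation*}
[z]_{n-m}=(-1)^{n-m}\,\frac{\Gamma(n-m-z)}{\Gamma(-z)},
\end{equation*}
first for $z\notin\N_0$ and then everywhere by continuity as a polynomial identity. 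Multiplying by $\theta_n(z)=(-1)^n/\Gamma(n-z)$ makes the signs cancel, and the expression becomes
\begin{equation*}
\theta_n(z)\Charlier n(z)=\frac{1}{\Gamma(-z)}\sum_{m=0}^{n}\frac{\mu^{m}}{m!}\,
\rho_{n,m}(z),\qquad
\rho_{n,m}(z)=\frac{[n]_m\,\Gamma(n-m-z)}{\Gamma(n-z)}=\prod_{j=1}^{m}\frac{n-j+1}{n-j-z}.
\end{equation*}
For each fixed $m$ we have $\rho_{n,m}(z)\to1$ locally uniformly, so the sum tends termwise to $e^{\mu}$. This would give $\varphi_C$.

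The main obstacle is justifying the termwise passage uniformly for $|z|\le R$, because $\rho_{n,m}$ is not uniformly bounded when $m$ is close to $n$. I would split the sum at $m=\lfloor n/2\rfloor$.
\begin{itemize}
\item For $m\le n/2$ and $n>4R+4$, each factor of $\rho_{n,m}$ is at most $1+2(R+1)/n$ in modulus. Hence $|\rho_{n,m}|\le e^{2(R+1)}$ uniformly, and dominated convergence against $\sum\mu^m/m!$ applies.
\item For $m>n/2$ I would return to the original form $\binom{n}{m}\mu^m|[z]_{n-m}|/|\Gamma(n-z)|$ and use two crude bounds: $|[z]_{n-m}|\le (R+1)_{n-m}$, and $1/|\Gamma(n-z)|\le C_R\,n^{R+1}/(n-1)!$ from Stirling. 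These show that the tail is at most a polynomial in $n$ times $\sum_{m>n/2}\mu^m/m!$, which tends to $0$.
\end{itemize}
The factor $1/\Gamma(-z)$ is entire and bounded on compacts, so the convergence is locally uniform in $\C$.

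For Meixner I would use the same scheme, starting from the monic expansion coming from the hypergeometric representation
\begin{equation*}
\Meixner n(x)=(\gamma)_n\Bigl(\tfrac{\mu}{\mu-1}\Bigr)^{n}{}_2F_1\bigl(-n,-x;\gamma;1-\mu^{-1}\bigr).
\end{equation*}
I would re-index it in powers of $[x]_{n-m}$, or equivalently build it from \eqref{eq:fwdMeixner} and the value $\Meixner n(0)$. After the same Gamma reflection and multiplication by $\theta_n(z)=(\mu-1)^n/\Gamma(n-z)$, I expect the $m$-th term to converge to
\begin{equation*}
\frac{(1-\mu)^{-\gamma}}{\Gamma(-z)}\,\frac{(\gamma+z)_m}{m!}\,\mu^{m}(1-\mu)^{-m}\cdot(\text{normalising powers of }1-\mu),
\end{equation*}
which sums by the binomial series to $(1-\mu)^{-\gamma-z}/\Gamma(-z)=\varphi_M(z)$. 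I would do the bookkeeping of the powers of $\mu$ and $1-\mu$ carefully at this step to land exactly on \eqref{eq:phis}.

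Uniformity would again be handled by the split at $n/2$. Since $0<\mu<1$, the geometric factor $\mu^m$ beats the polynomial growth $(\gamma+R)_m/m!=O(m^{\gamma+R})$, so the tail estimate is even easier than in the Charlier case. As a fallback for uniformity in both families, I would prove pointwise convergence on $\C\setminus\N_0$ together with a uniform local bound, and conclude by Vitali's theorem.
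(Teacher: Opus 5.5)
Your Charlier argument is correct, and it gives a self-contained proof of a result the paper only quotes from \cite{Dominici2015,mbDMS}. There is one harmless slip: for $j\le m\le n/2$ and $n>4R+4$, each factor of $\rho_{n,m}$ is bounded by $1+4(R+1)/n$, not $1+2(R+1)/n$, and this still gives your bound $e^{2(R+1)}$.

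The Meixner half, however, fails at its key step. Re-indexing ${}_2F_1(-n,-x;\gamma;1-\mu^{-1})$ in the basis $[x]_{n-m}$, or Newton-expanding with \eqref{eq:fwdMeixner}, gives
\[
\Meixner{n}(x)=\sum_{m=0}^{n}\binom{n}{m}\frac{(\gamma)_n}{(\gamma)_{n-m}}\Bigl(\frac{\mu}{\mu-1}\Bigr)^{m}[x]_{n-m},
\]
and after your Gamma rewriting this becomes
\[
\theta_n(z)\Meixner{n}(z)=\frac{1}{\Gamma(-z)}\sum_{m=0}^{n}\binom{n}{m}\mu^{m}(1-\mu)^{n-m}\,\frac{\Gamma(n+\gamma)\,\Gamma(n-m-z)}{\Gamma(n-m+\gamma)\,\Gamma(n-z)}.
\]
The ratio $(\gamma)_n/(\gamma)_{n-m}\sim n^{m}$ contributes an extra factor of order $n$ per step. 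The normalisation $\theta_n$ leaves behind the exponentially small weight $(1-\mu)^{n-m}$. So every fixed-$m$ term behaves like $(n\mu)^{m}(1-\mu)^{n-m}/m!$ and tends to $0$.

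The weights are binomial and concentrate at $m\approx\mu n$, where the Gamma ratio is close to $(1-\mu)^{-\gamma-z}$. The limit is therefore right, but it comes from a law-of-large-numbers effect. Termwise convergence with domination by a summable sequence in $m$ is simply false for this expansion. The Vitali fallback does not help, since it presupposes the pointwise limit.

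Two further points. Coefficients in the basis $[x]_{n-m}$ cannot depend on $z$, so the factor $(\gamma+z)_m$ you anticipate cannot arise this way. And the extra powers $(1-\mu)^{-m}$ in your guessed term would turn the limit into a series in $\mu/(1-\mu)$, which diverges for $\mu>1/2$.

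The missing idea is to use an $x$-dependent convolution instead. Read it off the generating function $(1-t/\mu)^{x}(1-t)^{-x-\gamma}=\sum_{n\ge0}\frac{(\gamma)_n}{n!}M_n(x;\gamma,\mu)\,t^{n}$ of the ${}_2F_1$-normalised polynomials, which gives
\[
\Meixner{n}(x)=\sum_{m=0}^{n}\binom{n}{m}(\gamma+x)_m\Bigl(\frac{\mu}{\mu-1}\Bigr)^{m}\frac{[x]_{n-m}}{(1-\mu)^{n-m}}.
\]
For Charlier, the corresponding convolution from $e^{t}(1-t/\mu)^{x}$ coincides with your basis expansion, because $e^{t}$ does not depend on $x$. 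That is exactly why your scheme worked there and does not transfer verbatim.

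With this representation your computation goes through unchanged and gives
\[
\theta_n(z)\Meixner{n}(z)=\frac{1}{\Gamma(-z)}\sum_{m=0}^{n}\frac{(\gamma+z)_m}{m!}\,\mu^{m}\,\rho_{n,m}(z),
\]
with the same $\rho_{n,m}$ as for Charlier and no powers of $1-\mu$. The rest then follows:
\begin{itemize}
\item The termwise limit sums, by the binomial series, to $(1-\mu)^{-\gamma-z}/\Gamma(-z)=\varphi_{M}(z)$.
\item The head $m\le n/2$ is dominated by $e^{2(R+1)}(\gamma+R)_m\mu^{m}/m!$.
\item Each tail term with $m>n/2$ is bounded by $\binom{n}{m}(\gamma+R)_m\mu^{m}(R)_{n-m}/|\Gamma(n-z)|$, so the tail is a power of $n$ times $\mu^{n/2}$. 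This is precisely the tail estimate you had in mind.
\end{itemize}
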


The following two lemmas collect the elementary asymptotic tools used
repeatedly in the sequel.

\begin{lemma}[Gamma-ratio expansion]\label{lem:gammaratio}
	For fixed $a,b\in\C$, as $n\to\infty$,
	\begin{equation}\label{eq:gammaratio}
		\frac{\Gamma(n+a)}{\Gamma(n+b)}
		=n^{a-b}\left(1+\frac{(a-b)(a+b-1)}{2n}+O(n^{-2})\right),
	\end{equation}
	uniformly for $a,b$ in compact sets; this is the standard Gamma-ratio expansion obtained from Stirling formula \cite{KLS2010}.
\end{lemma}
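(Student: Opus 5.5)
The plan is to reduce \eqref{eq:gammaratio} to Stirling's series for $\log\Gamma$, and then to control the error uniformly on compact sets through the analyticity of the remainder. Fix compact sets $K_a,K_b\subset\C$. For $n$ large, $n+a$ and $n+b$ lie in the sector $|\arg w|\le\pi/2$ and satisfy $|w|\ge n/2$, uniformly for $a\in K_a$ and $b\in K_b$. On this region we may use the expansion
\begin{equation*}
\log\Gamma(w)=\left(w-\tfrac12\right)\log w-w+\tfrac12\log(2\pi)+\frac{1}{12w}+R(w),\quad |R(w)|\le C|w|^{-3},
\end{equation*}
where the constant $C$ is uniform in the sector. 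This is the standard form of Stirling's formula on a closed subsector of $|\arg w|<\pi$.

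We then subtract the two expansions at $w=n+a$ and $w=n+b$. Writing $\log(n+a)=\log n+\log(1+a/n)$ and expanding $\log(1+a/n)=a/n-a^2/(2n^2)+O(n^{-3})$, the difference of the main terms becomes
\begin{equation*}
(a-b)\log n+\left[(n+a-\tfrac12)\log(1+\tfrac an)-(n+b-\tfrac12)\log(1+\tfrac bn)\right]-(a-b).
\end{equation*}
The bracket equals $a-b+\dfrac{a^2-b^2}{2n}-\dfrac{a-b}{2n}+O(n^{-2})$. The term $(a-b)$ inside the bracket cancels the trailing $-(a-b)$. The terms $\frac{1}{12w}$ and $R(w)$ contribute $O(n^{-2})$. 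Hence
\begin{equation*}
\log\frac{\Gamma(n+a)}{\Gamma(n+b)}=(a-b)\log n+\frac{(a-b)(a+b-1)}{2n}+O(n^{-2}).
\end{equation*}
Exponentiating, with $e^{x}=1+x+O(x^2)$ and $x=O(n^{-1})$, gives \eqref{eq:gammaratio}. Here we take the principal branch, so that $n^{a-b}=e^{(a-b)\log n}$.

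The main obstacle is the uniformity of all the $O(\cdot)$ constants in $(a,b)$. Every Taylor remainder above is bounded by $|a|^3/n^3$, $|b|^3/n^3$ or similar quantities, multiplied by absolute constants valid once $n\ge 2\max(|a|,|b|)$. These bounds are uniform because $|a|$ and $|b|$ are bounded on the compact sets. The Stirling remainder is uniform because of the sector condition. One also has to check that the branch of $\log\Gamma$ used is consistent for the difference. This holds because $n+a$ and $n+b$ stay in a simply connected region in the right half-plane where $\log\Gamma$ is analytic.
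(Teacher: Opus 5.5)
Your proposal is correct and takes the same route as the paper. The paper gives no proof and only cites the lemma as the standard consequence of Stirling's formula \cite{KLS2010}. Your argument supplies the omitted details: subtract the two Stirling expansions, note that the $\frac{1}{12w}$ terms and the $O(|w|^{-3})$ remainders contribute only $O(n^{-2})$, extract the $1/n$ coefficient $\frac{(a-b)(a+b-1)}{2}$, and check that all constants are uniform for $a,b$ in compact sets.
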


\begin{lemma}[Shift ratios]\label{lem:shiftratio}
	With $\theta_{n}$ as in \eqref{eq:theta}, the elementary shift ratios are
	\begin{equation*}\label{eq:shiftratio}
		\frac{\theta_{n}(z)}{\theta_{n}(z-1)}
		=\frac{\Gamma(n-z+1)}{\Gamma(n-z)}=n-z,
		\quad
		\frac{\theta_{n}(z)}{\theta_{n}(z+1)}
		=\frac{1}{n-z-1},
	\end{equation*}
	which follow from $\Gamma(w+1)=w\Gamma(w)$ and are read as identities between
	meromorphic functions of $z$. The first ratio, the only one used below, equals
	the polynomial $n-z$ and is entire; the second is meromorphic, with a simple
	pole at $z=n-1$, valid wherever the quotient is defined.
\end{lemma}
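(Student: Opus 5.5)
The statement is Lemma~\ref{lem:shiftratio}, which is purely algebraic. The plan is to derive both ratios directly from the definition \eqref{eq:theta} and the functional equation $\Gamma(w+1)=w\Gamma(w)$, paying attention to where each identity holds as one between entire or meromorphic functions.

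\textbf{Cancelling the prefactor.} The factor $\kappa^{n}$ depends only on $n$ and not on $z$, so it cancels in both quotients. This gives
\begin{equation*}
\frac{\theta_{n}(z)}{\theta_{n}(z-1)}=\frac{\Gamma(n-z+1)}{\Gamma(n-z)},
\qquad
\frac{\theta_{n}(z)}{\theta_{n}(z+1)}=\frac{\Gamma(n-z-1)}{\Gamma(n-z)}.
\end{equation*}

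\textbf{First ratio.} Put $w=n-z$ in $\Gamma(w+1)=w\Gamma(w)$; the quotient equals $n-z$. To avoid dividing by zeros or poles of $\Gamma$, I will phrase the identity in reciprocal-Gamma form,
\begin{equation*}
\theta_{n}(z-1)\,(n-z)=\theta_{n}(z).
\end{equation*}
This follows from $1/\Gamma(w)=w/\Gamma(w+1)$, which is an identity between entire functions of $w$ and so holds for every $z\in\C$. Consequently the ratio is the polynomial $n-z$ wherever $\theta_{n}(z-1)\neq 0$. Since $\theta_{n}(z-1)=0$ only at $z=n+1,n+2,\dots$, the ratio extends there by continuity to the entire function $n-z$.

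\textbf{Second ratio.} Put $w=n-z-1$, so $\Gamma(n-z)=(n-z-1)\Gamma(n-z-1)$, and the quotient equals $1/(n-z-1)$. This is meromorphic with a single simple pole at $z=n-1$. That pole reflects $\theta_{n}(z+1)=\kappa^{n}/\Gamma(n-z-1)$ vanishing at $z=n-1$ while $\theta_{n}(n-1)=\kappa^{n}\neq0$.

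\textbf{Main obstacle.} The only delicate point is the pole and zero bookkeeping: the quotients must be read as identities of meromorphic functions, and the reciprocal form above is what handles this. Beyond that, the argument is routine.
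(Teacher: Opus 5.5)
Your proposal is correct and is essentially the paper's argument: the $z$-independent factor $\kappa^{n}$ cancels, and $\Gamma(w+1)=w\Gamma(w)$ is applied with $w=n-z$ and $w=n-z-1$. The entire-function identity $\theta_{n}(z-1)\,(n-z)=\theta_{n}(z)$ and your zero/pole bookkeeping simply spell out what the paper means by reading the ratios as identities between meromorphic functions.
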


We shall also use the two one-step recurrences for the normalising sequence,
\begin{equation}\label{eq:thetarec}
	\theta_{n}(z)=\frac{\kappa}{\,n-1-z\,}\,\theta_{n-1}(z),
	\quad
	\theta_{n}(z)=\kappa\,\theta_{n-1}(z-1),
\end{equation}
both of which are immediate consequences of $\Gamma(w+1)=w\,\Gamma(w)$ applied
to \eqref{eq:theta}; the first keeps the argument fixed, the second shifts it by
one unit.

The second-order difference equation \eqref{eq:sode} of
Proposition~\ref{prop:classical} is of hypergeometric type in the classical
sense, with $\sigma$ of degree at most two, $\tau$ of degree one, and
$\lambda_{n}$ constant in $x$ \cite[Ch.~2]{nikiforov1991classical}.  We
emphasise that the \emph{modified} families $\CharlierA n$, $\MeixnerA n$
also satisfy second-order linear difference equations
\cite[Theorem~2]{alvareznodarse1995}, but with coefficients that fall outside this
classical hypergeometric-type class; we shall therefore refer to them
simply as second-order linear difference equations.  The Charlier
difference preserves the parameter $\mu$, whereas the Meixner difference
shifts the first parameter, $\gamma\mapsto\gamma+k$, as in
\cite[Eqs.~(4.52),(4.54)]{ANbook}.

\subsection{The endpoint Uvarov modification}
\label{subsec:masspoint}

This subsection describes how a single Uvarov mass placed at the endpoint
$x=0$ modifies the classical Charlier and Meixner families, and isolates the two
scalar constants that govern the entire asymptotic analysis that follows.
Throughout the paper we call the perturbation the \emph{endpoint Uvarov
modification} (equivalently, the addition of the \emph{endpoint mass}
$A\delta_0$); we reserve ``Sobolev-type'' for perturbations in which a
difference operator acts inside the inner product, as in
\cite{Costas-2022,SoriaMichel2026}.

Following \cite{alvareznodarse1995}, for $A\geq 0$ we consider the endpoint Uvarov modification of the classical inner product
\begin{equation}\label{eq:functional}
	\langle p,q\rangle_{A}
	=\langle p,q\rangle+A\,p(0)q(0),
	\quad p,q\in\PP.
\end{equation}
Let $\{P_n^A\}_{n\ge0}$ denote the corresponding sequence of monic orthogonal
polynomials. Expanding $P_n^A-P_n$ in the orthogonal basis
$\{P_0,\ldots,P_{n-1}\}$ and using \eqref{eq:functional} gives the rank-one
connection formula, see \cite{alvareznodarse1995},
\begin{equation*}\label{eq:fourier}
	P^{A}_{n}(x)=P_{n}(x)-A\,P^{A}_{n}(0)\sum_{k=0}^{n-1}
	\frac{P_{k}(0)P_{k}(x)}{d_{k}^{2}},
	\quad
	P^{A}_{n}(0)=\frac{P_{n}(0)}{1+A\,\Ker_{n-1}(0,0)}.
\end{equation*}
Using the kernel identities \eqref{eq:kerC}--\eqref{eq:kerM}, this expansion
reduces to the one-term backward-difference form
\cite[Eqs.~(27), (29)]{alvareznodarse1995}
\begin{equation}\label{eq:perturb}
	\CharlierA n(x)=\Charlier n(x)+D_{n}\,\nab\Charlier n(x),
	\quad
	\MeixnerA n(x)=\Meixner n(x)+B_{n}\,\nab\Meixner n(x),
\end{equation}
where the perturbation constants are \cite[Sec.~3]{alvareznodarse1995}
\begin{align}
	D_{n}&=A\,\frac{\mu^{n}}{n!\,\bigl(1+A\,\Ker^{C}_{n-1}(0,0)\bigr)},
	\label{eq:Dn}\\
	B_{n}&=A\,\frac{\mu^{n}(1-\mu)^{\gamma-1}(\gamma)_{n}}
	{n!\,\bigl(1+A\,\Ker^{M}_{n-1}(0,0)\bigr)}.
	\label{eq:Bn}
\end{align}
For $A=0$ one has $D_{n}=B_{n}=0$ and the classical polynomials are
recovered. Formula \eqref{eq:perturb} is the representation used throughout:
it isolates the perturbation into a single scalar multiple of a backward
difference of the classical polynomial, so the whole asymptotic problem
reduces to the interplay between the size of $D_{n}$, $B_{n}$ and the local
behaviour of $\nab P_{n}$.

The analysis therefore rests on the size of the constants $D_{n}$ and
$B_{n}$, whose decay mechanisms are different. The Charlier constant contains
the genuine factorial factor $\mu^{n}/n!$ and is super-exponentially small.
For Meixner, the quotient $(\gamma)_{n}/n!$ is asymptotic to a polynomial
factor, so the decay is driven by $\mu^{n}$ with $0<\mu<1$ and is merely
exponential. Both nevertheless decay faster than every power of $n^{-1}$. The
next lemma records the precise leading behaviour.

\begin{lemma}\label{lem:coeff}
	Let $A>0$ be fixed. As $n\to\infty$,
	\begin{align}
		D_{n}&=\frac{A}{1+A\,e^{\mu}}\,\frac{\mu^{n}}{n!}\,
		\bigl(1+O(\mu^{n}/n!)\bigr),
		\label{eq:Dnasy}\\
		B_{n}&=\frac{A\,(1-\mu)^{\gamma-1}}{(1+A)\,\Gamma(\gamma)}\,
		\mu^{n}\,n^{\gamma-1}\,
		\bigl(1+O(n^{-1})\bigr).
		\label{eq:Bnasy}
	\end{align}
	In particular $D_{n}\to0$ and $B_{n}\to0$ faster than any power of
	$n^{-1}$, and $nD_{n}\to0$, $nB_{n}\to0$.
\end{lemma}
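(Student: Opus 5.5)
The plan is to treat the two constants separately, working directly from the explicit formulas \eqref{eq:Dn}--\eqref{eq:Bn}. In each case I split the constant into a numerator factor ($\mu^{n}/n!$ for Charlier, $\mu^{n}(\gamma)_n/n!$ for Meixner) and a denominator $1+A\,\Ker_{n-1}(0,0)$. The denominator converges to $1+A\,\Ker_\infty(0,0)$ by Proposition~\ref{prop:kernels}, so the only real work is to quantify the rate of that convergence and the asymptotics of the numerator.

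\textbf{Charlier.} By \eqref{eq:kerC00},
\[
\Ker^{C}_{n-1}(0,0)=e^{\mu}-\sum_{m\ge n}\frac{\mu^{m}}{m!}.
\]
The tail is bounded by
\[
\frac{\mu^{n}}{n!}\sum_{j\ge0}\frac{\mu^{j}}{(n+1)_j}\le \frac{\mu^{n}}{n!}\,e^{\mu},
\]
so it is $O(\mu^n/n!)$. Since $A>0$, we have $1+Ae^{\mu}>0$, and therefore
\[
\frac{1}{1+A\Ker^{C}_{n-1}(0,0)}=\frac{1}{1+Ae^{\mu}}\bigl(1+O(\mu^{n}/n!)\bigr).
\]
Substituting this into \eqref{eq:Dn} gives \eqref{eq:Dnasy}.

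\textbf{Meixner.} By \eqref{eq:kerM00} and the binomial series in Proposition~\ref{prop:kernels},
\[
\Ker^{M}_{n-1}(0,0)=1-(1-\mu)^{\gamma}\sum_{m\ge n}\frac{(\gamma)_m\mu^m}{m!}.
\]
Writing $(\gamma)_m/m!=\Gamma(m+\gamma)/(\Gamma(\gamma)\Gamma(m+1))$ and applying Lemma~\ref{lem:gammaratio}, the terms are $O(m^{\gamma-1}\mu^m)$. The tail is therefore $O(n^{\gamma-1}\mu^n)$; this decays exponentially and so is in particular $O(n^{-1})$. The same lemma gives the numerator factor
\[
\frac{(\gamma)_n}{n!}=\frac{n^{\gamma-1}}{\Gamma(\gamma)}\bigl(1+O(n^{-1})\bigr).
\]
Combining these with $(1+A)^{-1}(1+O(n^{-1}))$ for the reciprocal denominator yields \eqref{eq:Bnasy}. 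The error term is dominated by the Gamma-ratio correction, not by the kernel tail, which is why the stated remainder is $O(n^{-1})$ rather than exponentially small.

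\textbf{The ``in particular'' clause.} For any fixed $p$, we have $n^{p}\mu^{n}/n!\to0$, and $n^{p+\gamma-1}\mu^{n}\to0$ because $0<\mu<1$. Taking $p=1$ gives $nD_n\to0$ and $nB_n\to0$.

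\textbf{Main obstacle.} The delicate point is only to make the error terms uniform and honest. The Charlier tail bound needs the crude comparison $(n+1)_j\ge j!$. The Meixner tail needs a uniform bound $(\gamma)_m/m!\le C m^{\gamma-1}$ for all $m\ge1$, which follows from Lemma~\ref{lem:gammaratio} together with finitely many initial terms. Beyond that, the argument is a routine assembly of the pieces above.
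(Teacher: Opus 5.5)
Your proposal is correct and takes essentially the same route as the paper. Both split each constant into its explicit numerator and the kernel denominator, bound the kernel tail by $O(\mu^{n}/n!)$ for Charlier and by $O(n^{\gamma-1}\mu^{n})$ for Meixner (using $(\gamma)_m/m!\le c\,m^{\gamma-1}$), and read off the leading term from the Gamma-ratio expansion; your explicit tail bound via $(n+1)_j\ge j!$ and your remark that the Meixner $O(n^{-1})$ remainder comes from the Gamma-ratio correction rather than the exponentially small kernel tail simply add detail the paper leaves implicit.
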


\begin{proof}
	In the Charlier case, Proposition \ref{prop:kernels} shows that
	\begin{equation*}
		\Ker^{C}_{n-1}(0,0)\to e^{\mu},
	\end{equation*}
	with an error given by the Poisson tail
	\begin{equation*}
		\sum_{m\ge n}\frac{\mu^{m}}{m!}
		=
		O\!\left(\frac{\mu^{n}}{n!}\right).
	\end{equation*}
	Substituting this estimate into \eqref{eq:Dn} then yields
	\eqref{eq:Dnasy}.
	
	For the Meixner case, Proposition \ref{prop:kernels} shows that the
	denominator in \eqref{eq:Bn} converges to $1+A$. Moreover, by the
	Gamma-ratio expansion \eqref{eq:gammaratio},
	\begin{equation}\label{eq:pochleading}
		\frac{(\gamma)_{n}}{n!}
		=
		\frac{n^{\gamma-1}}{\Gamma(\gamma)}
		\left(1+O\!\left(n^{-1}\right)\right),
	\end{equation}
	which is the leading factor in \eqref{eq:Bnasy}. To control the tail of the
	kernel sum \eqref{eq:kerM00}, observe first that \eqref{eq:pochleading},
	applied with $n$ replaced by $m$, obtains that
	\begin{equation*}
		\frac{(\gamma)_{m}}{m!}
		\leq c\,m^{\gamma-1},
		\quad m\geq 1.
	\end{equation*}
	Hence, for $0<\mu<1$,
	\begin{equation*}
		\sum_{m\ge n}\frac{(\gamma)_{m}\,\mu^{m}}{m!}
		\le c\sum_{m\ge n}m^{\gamma-1}\mu^{m}
		=O\!\left(n^{\gamma-1}\mu^{n}\right),
	\end{equation*}
	the last bound being the standard estimate for the tail of a geometric
	series weighted by a fixed power (comparison with
	$\int_{n}^{\infty}t^{\gamma-1}\mu^{t}\,\rmd t$). Thus replacing the
	denominator by $1+A$ changes only an exponentially small relative term, and
	combining these facts proves \eqref{eq:Bnasy}. For Charlier, $n!$ dominates
	every fixed exponential $\mu^{n}$; for Meixner, $0<\mu<1$ and the factor
	$\mu^{n}$ dominates the polynomial factor $n^{\gamma-1}$. Hence both $D_{n}$
	and $B_{n}$ decay faster than every power of $n^{-1}$, but only the Charlier
	decay is super-exponential.
\end{proof}

\section{Mehler--Heine asymptotics for finite differences of Charlier and Meixner polynomials}
\label{sec:mhfinite}

The classical Mehler--Heine formulas \eqref{eq:classicalMH} describe the scaled
limit of the polynomials themselves. Since the endpoint Uvarov perturbation
\eqref{eq:perturb} is a scalar multiple of a backward difference, and since we
shall later apply external differences of arbitrary order to the modified
polynomials, we first determine the Mehler--Heine behaviour of the forward and
backward differences $\dif^{k}P_{n}$ and $\nab^{k}P_{n}$ of the classical
families for every fixed $k\in\Nzero$. The two operators behave differently
under the reciprocal-Gamma normalisation: each difference lowers the degree by
one and produces a factor $n$; the forward operator leaves the normalising
sequence $\theta_{n}$ and the argument $z$ unchanged, whereas the backward
operator requires division by $n^{k}$ and shifts the argument by $k$ units. We
prove the general order-$k$ statement directly; the cases $k=0$ and $k=1$, used
repeatedly below, are recorded afterwards as immediate specializations.

The four order-$k$ limits share a single structure once the family-dependent
data are collected into the normalising base $\kappa$ of \eqref{eq:theta} and
the classical limit function $\varphi$ of \eqref{eq:phis}. Let
$\{P_{n}\}_{n\ge0}$ denote either the monic Charlier family $\{\Charlier n\}$ or
the monic Meixner family $\{\Meixner n\}$ of Table~\ref{tab:families}, and write
\begin{equation}\label{eq:unifieddata}
	\varphi=
	\begin{cases}
		\varphi_{C}, & \text{Charlier},\\[1mm]
		\varphi_{M}, & \text{Meixner},
	\end{cases}
	\quad
	\kappa=
	\begin{cases}
		-1, & \text{Charlier},\\[1mm]
		\mu-1, & \text{Meixner},
	\end{cases}
	\quad
	\rho=|\kappa|=
	\begin{cases}
		1, & \text{Charlier},\\[1mm]
		1-\mu, & \text{Meixner},
	\end{cases}
\end{equation}
with $\theta_{n}$ as in \eqref{eq:theta} for the corresponding value of
$\kappa$. The single scalar $\rho$ records the only point at which the two
families differ in the backward limit: it is $1$ for Charlier and $1-\mu$ for
Meixner.

\begin{theorem}[Higher-order Mehler--Heine type formulas]\label{thm:higherMH}
	Let $k\in\N_{0}$ be fixed and let $\{P_{n}\}_{n\ge0}$ be the monic Charlier
	or Meixner family, with $\varphi$, $\kappa$ and $\rho$ as in
	\eqref{eq:unifieddata}. Then, locally uniformly in $\C$, the forward
	differences satisfy
	\begin{equation}\label{eq:unifiedFwd}
		\lim_{n\to\infty}\theta_n(z)\,\dif^kP_n(z)
		=(-1)^k\varphi(z),
	\end{equation}
	while the backward differences satisfy
	\begin{equation}\label{eq:unifiedBwd}
		\lim_{n\to\infty}\frac{\theta_n(z)}{n^k}\,\nab^kP_n(z)
		=(-1)^k\varphi(z-k).
	\end{equation}
	Written out for the two families, \eqref{eq:unifiedFwd} reads
	\begin{align}
		\lim_{n\to\infty}\theta_{n}(z)\,\dif^{k}\Charlier n(z)
		&=(-1)^{k}\,\varphi_{C}(z),
		\label{eq:higherMHCf}\\
		\lim_{n\to\infty}\theta_{n}(z)\,\dif^{k}\Meixner n(z)
		&=(-1)^{k}\,\varphi_{M}(z),
		\label{eq:higherMHMf}
	\end{align}
	and \eqref{eq:unifiedBwd} reads
	\begin{align}
		\lim_{n\to\infty}\frac{\theta_{n}(z)}{n^{k}}\,\nab^{k}\Charlier n(z)
		&=(-1)^{k}\,\varphi_{C}(z-k),
		\label{eq:higherMHC}\\
		\lim_{n\to\infty}\frac{\theta_{n}(z)}{n^{k}}\,\nab^{k}\Meixner n(z)
		&=(-1)^{k}\,\varphi_{M}(z-k).
		\label{eq:higherMHM}
	\end{align}
	For $k\ge1$ and $z\notin\{0,1,\dots,k-1\}$, the backward limit admits the
	quotient representation
	\begin{equation}\label{eq:bwdquotient}
		(-1)^k\varphi(z-k)
		=\frac{\rho^k}{[z]_k}\,\varphi(z),
	\end{equation}
	which for the two families reads
	\begin{equation*}
		(-1)^k\varphi_C(z-k)=\frac{\varphi_C(z)}{[z]_k},
		\quad
		(-1)^k\varphi_M(z-k)
		=\frac{(1-\mu)^k}{[z]_k}\,\varphi_M(z).
	\end{equation*}
	The forward limit is entire and coincides with the classical limit function
	up to the factor $(-1)^k$. The apparent singularities of \eqref{eq:bwdquotient}
	at $z=0,1,\dots,k-1$ are removable, being cancelled by the corresponding
	zeros of $1/\Gamma(-z)$, so the backward limit is entire as well. For $k=0$
	the formulas reduce to the classical limits \eqref{eq:classicalMH}.
\end{theorem}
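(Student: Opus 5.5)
The plan is to reduce everything to the classical limits of Proposition~\ref{prop:MH}. First I apply the exact shift identities \eqref{eq:fwdCharlier}--\eqref{eq:bwdMeixner}. Then I rewrite the normalising factor $\theta_n$ in terms of $\theta_{n-k}$, so that the classical Mehler--Heine formula appears at degree $n-k$. For Meixner that formula is used with first parameter $\gamma+k$; since $\gamma+k>0$, Proposition~\ref{prop:MH} applies. Throughout, write $\varphi^{(\gamma+k)}_M(z)=1/\bigl((1-\mu)^{\gamma+k+z}\Gamma(-z)\bigr)$ for the Meixner limit with shifted parameter. It satisfies
\[
\varphi^{(\gamma+k)}_M(w)=(1-\mu)^{-k}\varphi_M(w),
\]
while in the Charlier case nothing is shifted.

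\emph{Forward case.} For $n\ge k$ we have $\dif^kP_n(z)=\ff{n}{k}\,\widetilde P_{n-k}(z)$, where $\widetilde P$ denotes the (possibly parameter-shifted) family. From \eqref{eq:theta},
\[
\theta_n(z)=\kappa^k\,\frac{\Gamma(n-k-z)}{\Gamma(n-z)}\,\theta_{n-k}(z)
=\frac{\kappa^k}{\prod_{j=1}^{k}(n-j-z)}\,\theta_{n-k}(z).
\]
Hence
\[
\theta_n(z)\dif^kP_n(z)=\kappa^k\,\frac{\ff{n}{k}}{\prod_{j=1}^{k}(n-j-z)}\;\theta_{n-k}(z)\widetilde P_{n-k}(z).
\]
The rational prefactor tends to $1$ uniformly on compact sets: it is a ratio of two monic degree-$k$ polynomials in $n$, with $z$ bounded. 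The last factor tends to $\varphi(z)$ in the Charlier case, and to $\varphi^{(\gamma+k)}_M(z)$ in the Meixner case, locally uniformly, by Proposition~\ref{prop:MH}. The resulting constants are $(-1)^k$ for Charlier, and $(\mu-1)^k(1-\mu)^{-k}=(-1)^k$ for Meixner. This gives \eqref{eq:unifiedFwd}, together with \eqref{eq:higherMHCf} and \eqref{eq:higherMHMf}.

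\emph{Backward case.} Iterating the second recurrence in \eqref{eq:thetarec} $k$ times gives $\theta_n(z)=\kappa^k\theta_{n-k}(z-k)$. This is an exact identity between entire functions, so no poles arise. With \eqref{eq:bwdCharlier}--\eqref{eq:bwdMeixner} it follows that
\[
\frac{\theta_n(z)}{n^k}\nab^kP_n(z)=\kappa^k\frac{\ff{n}{k}}{n^k}\,\theta_{n-k}(z-k)\,\widetilde P_{n-k}(z-k).
\]
Here $\ff{n}{k}/n^k\to1$. If $z$ ranges over a compact set $K$, then $z-k$ ranges over the compact set $K-k$, so Proposition~\ref{prop:MH} applies uniformly at the shifted argument. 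The limit is $\kappa^k\varphi(z-k)$ for Charlier, and $\kappa^k(1-\mu)^{-k}\varphi_M(z-k)$ for Meixner. Both equal $(-1)^k\varphi(z-k)$, which proves \eqref{eq:unifiedBwd}, \eqref{eq:higherMHC} and \eqref{eq:higherMHM}. For $k=0$ both statements collapse to \eqref{eq:classicalMH}.

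\emph{Quotient representation.} The key identity is
\[
\Gamma(k-z)=(-z)_k\Gamma(-z)=(-1)^k\ff{z}{k}\,\Gamma(-z).
\]
For Charlier this gives $(-1)^k\varphi_C(z-k)=\varphi_C(z)/\ff{z}{k}$ directly. For Meixner, the additional exponential factor $(1-\mu)^{-(z-k)}=(1-\mu)^{k}(1-\mu)^{-z}$ produces exactly $\rho^k=(1-\mu)^k$. Entireness of both limits is immediate, since each is a nonzero entire factor times $1/\Gamma(-z)$ or $1/\Gamma(k-z)$. The zeros of $\ff{z}{k}$ at $z=0,\dots,k-1$ are simple, and they are cancelled by the simple zeros of $1/\Gamma(-z)$ at those same points, so the singularities in \eqref{eq:bwdquotient} are removable.

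\emph{Main obstacle.} The only delicate point is the bookkeeping of the Meixner parameter shift $\gamma\mapsto\gamma+k$. One must check that the factor $(1-\mu)^{-k}$ it generates exactly cancels the modulus of $\kappa^k=(\mu-1)^k$, leaving the sign $(-1)^k$. One must also confirm that the uniformity in Proposition~\ref{prop:MH} holds for the shifted parameter $\gamma+k$ and the shifted compact set $K-k$. Since $k$ is fixed, both are finite changes, and uniformity is inherited.
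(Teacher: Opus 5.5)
Your proposal is correct and follows essentially the same route as the paper. Both use the two exact rewritings $\theta_n(z)=\kappa^k\theta_{n-k}(z)/\prod_{j=1}^{k}(n-j-z)$ and $\theta_n(z)=\kappa^k\theta_{n-k}(z-k)$, the shift identities, Proposition~\ref{prop:MH} with parameter $\gamma+k$ at $z$ or $z-k$, the relation $\varphi_M^{(\gamma+k)}=(1-\mu)^{-k}\varphi_M$, and $\Gamma(k-z)=(-1)^k[z]_k\Gamma(-z)$ for the quotient form. The only cosmetic difference is that you justify removability by matching the simple zeros of $[z]_k$ with those of $1/\Gamma(-z)$, whereas the paper simply invokes the entireness of $\varphi(z-k)$.
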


\begin{proof}
	Iterating the relations in \eqref{eq:thetarec}, for every fixed
	$k\in\N_{0}$ we obtain
	\begin{equation}\label{eq:thetareck}
		\theta_n(z)
		=
		\frac{\kappa^k}
		{(n-1-z)(n-2-z)\cdots(n-k-z)}
		\,\theta_{n-k}(z),
		\quad
		\theta_n(z)
		=
		\kappa^k\theta_{n-k}(z-k),
	\end{equation}
	both of which follow from applying $\Gamma(w+1)=w\,\Gamma(w)$ repeatedly to
	\eqref{eq:theta}; the first keeps the argument fixed, the second shifts it by
	$k$ units. Since $k$ is fixed,
	\begin{equation}\label{eq:prodratio}
		\frac{[n]_k}
		{(n-1-z)(n-2-z)\cdots(n-k-z)}
		\longrightarrow 1,
	\end{equation}
	locally uniformly in $\C$.
	
	We first consider the forward differences. For the Charlier family,
	$\kappa=-1$. Hence, by \eqref{eq:fwdCharlier} and the first identity in
	\eqref{eq:thetareck}, we have
	\begin{equation*}
		\theta_n(z)\,\dif^k\Charlier n(z)
		=
		(-1)^k
		\frac{[n]_k}
		{(n-1-z)\cdots(n-k-z)}
		\theta_{n-k}(z)\,\Charlier{n-k}(z).
	\end{equation*}
	Using \eqref{eq:prodratio} together with Proposition
	\ref{prop:MH}, we obtain
	\begin{equation*}
		\theta_n(z)\,\dif^k\Charlier n(z)
		\longrightarrow
		(-1)^k\varphi_C(z),
	\end{equation*}
	which proves \eqref{eq:higherMHCf}.
	
	Similarly, \eqref{eq:fwdMeixner} gives
	\begin{equation*}
		\theta_n(z)\,\dif^k\Meixner n(z)
		=
		(\mu-1)^k
		\frac{[n]_k}
		{(n-1-z)\cdots(n-k-z)}
		\theta_{n-k}(z)\,
		\MeixnerS{\gamma+k}{n-k}(z).
	\end{equation*}
	Applying Proposition \ref{prop:MH} to the Meixner family with
	parameter $\gamma+k$ yields
	\begin{equation*}
		\theta_{n-k}(z)\,
		\MeixnerS{\gamma+k}{n-k}(z)
		\longrightarrow
		\varphi_M^{(\gamma+k)}(z).
	\end{equation*}
	Moreover, by \eqref{eq:phis}, we obtain
	\begin{equation}\label{eq:phishiftk}
		\varphi_M^{(\gamma+k)}(z)
		=
		\frac{\varphi_M(z)}{(1-\mu)^k}.
	\end{equation}
	Hence,
	\begin{equation*}
		(\mu-1)^k\varphi_M^{(\gamma+k)}(z)
		=
		(-1)^k\varphi_M(z),
	\end{equation*}
	and therefore \eqref{eq:higherMHMf} follows.
	
	We now turn to the backward differences. For the Charlier family,
	$\kappa=-1$. Hence, from \eqref{eq:bwdCharlier} and the second identity in
	\eqref{eq:thetareck}, we deduce
	\begin{equation*}
		\frac{\theta_n(z)}{n^k}\,
		\nab^k\Charlier n(z)
		=
		(-1)^k
		\frac{[n]_k}{n^k}
		\theta_{n-k}(z-k)\,
		\Charlier{n-k}(z-k).
	\end{equation*}
	Since $[n]_k/n^k\to1$, Proposition \ref{prop:MH}, applied at the shifted
	argument $z-k$, gives
	\begin{equation*}
		\frac{\theta_n(z)}{n^k}\,
		\nab^k\Charlier n(z)
		\longrightarrow
		(-1)^k\varphi_C(z-k),
	\end{equation*}
	which proves \eqref{eq:higherMHC}.
	
	For the Meixner family, \eqref{eq:bwdMeixner} and
	\eqref{eq:thetareck} similarly yield
	\begin{equation*}
		\frac{\theta_n(z)}{n^k}\,
		\nab^k\Meixner n(z)
		=
		(\mu-1)^k
		\frac{[n]_k}{n^k}
		\theta_{n-k}(z-k)\,
		\MeixnerS{\gamma+k}{n-k}(z-k).
	\end{equation*}
	By Proposition \ref{prop:MH}, we have
	\begin{equation*}
		\theta_{n-k}(z-k)\,
		\MeixnerS{\gamma+k}{n-k}(z-k)
		\longrightarrow
		\varphi_M^{(\gamma+k)}(z-k).
	\end{equation*}
	Using \eqref{eq:phishiftk} at $z-k$, we deduce
	\begin{equation*}
		\varphi_M^{(\gamma+k)}(z-k)
		=
		\frac{\varphi_M(z-k)}{(1-\mu)^k},
	\end{equation*}
	and consequently
	\begin{equation*}
		(\mu-1)^k
		\varphi_M^{(\gamma+k)}(z-k)
		=
		(-1)^k\varphi_M(z-k).
	\end{equation*}
	This proves \eqref{eq:higherMHM}, and hence \eqref{eq:unifiedFwd} and
	\eqref{eq:unifiedBwd} in unified form.
	
	It remains to obtain the quotient representation \eqref{eq:bwdquotient}.
	Repeated application of the Gamma functional equation gives
	\begin{equation*}
		\Gamma(k-z)
		=
		(-1)^k[z]_k\,\Gamma(-z).
	\end{equation*}
	Thus, for $z\notin\{0,1,\ldots,k-1\}$, we deduce
	\begin{equation*}
		(-1)^k\varphi_C(z-k)
		=
		\frac{\varphi_C(z)}{[z]_k},
		\qquad
		(-1)^k\varphi_M(z-k)
		=
		\frac{(1-\mu)^k}{[z]_k}\,
		\varphi_M(z),
	\end{equation*}
	which is \eqref{eq:bwdquotient} written out for each family. Since
	$\varphi_C(z-k)$ and $\varphi_M(z-k)$ are entire, the apparent singularities
	at $z=0,1,\ldots,k-1$ are removable.
	
	Finally, all the above convergences are locally uniform in $\C$: Proposition
	\ref{prop:MH} provides locally uniform convergence for the polynomial
	factors, and this persists after the fixed translation $z\mapsto z-k$;
	moreover the factor in \eqref{eq:prodratio} converges locally uniformly to
	$1$, and $[n]_k/n^k\to1$. Hence all the products above converge locally
	uniformly on every compact subset of $\C$.
\end{proof}

The backward limits in \eqref{eq:higherMHC}--\eqref{eq:higherMHM} are stated
with the power normalisation $n^{k}$. They are unchanged if one replaces
$n^{k}$ by the falling factorial $\ff{n}{k}$, since $\ff{n}{k}/n^{k}\to1$ for
every fixed $k$; the two normalisations are therefore asymptotically
equivalent and select the same limiting functions. From
Section~\ref{sec:mhmass} onwards we adopt the falling-factorial normalisation
$\ff{n}{k}$, because the backward-shift identity
\eqref{eq:bwdCharlier}--\eqref{eq:bwdMeixner} produces exactly this factor and
yields the \emph{exact} cancellation
\begin{equation*}
	\frac{\theta_{n}(z)}{\ff{n}{k}}\,\nab^{k}P_{n}(z)
	=\kappa^{k}\,\theta_{n-k}(z-k)\,P_{n-k}(z-k),
\end{equation*}
with no residual ratio to estimate.

\section{Mehler--Heine formulas for finite differences of mass-modified polynomials}
\label{sec:mhmass}

We now transfer the finite-difference Mehler--Heine formulas of
Section~\ref{sec:mhfinite} to the mass-modified families. The two ingredients
are the one-term backward-difference representation \eqref{eq:perturb}, which
isolates the perturbation into a single scalar multiple of $\nab P_{n}$, and the
order-$k$ formulas of Theorem~\ref{thm:higherMH}. Because $\dif$ and $\nab$
commute and the perturbation term in \eqref{eq:perturb} is itself a scalar
multiple of $\nab P_{n}$, every external difference of the modified polynomials
splits into a classical difference plus a perturbation term that carries one
extra backward difference. The whole point is a separation of scales, which we
state precisely before proving the theorem.

It is important to distinguish two quantities. The perturbation
\emph{coefficient} $D_{n}$ or $B_{n}$ decays beyond all algebraic orders by
Lemma~\ref{lem:coeff}. The perturbation \emph{term} that actually appears after
the Mehler--Heine normalisation is not the bare coefficient but the product of
the coefficient with a normalised difference of the classical polynomial. For a
$k$th-order external difference, the perturbation term contains one further
difference of the classical polynomial, of order $k+1$; under the
reciprocal-Gamma normalisation this normalised difference is locally uniformly
bounded (it converges to a fixed reciprocal-Gamma profile), while the leftover
algebraic factor produced by that extra difference is exactly one power of $n$.
The relevant quantity is therefore $nD_{n}$ or $nB_{n}$, not merely $D_{n}$ or
$B_{n}$. By Lemma~\ref{lem:coeff},
\begin{equation}\label{eq:nDn}
	nD_{n}
	=O\!\left(\frac{n\mu^{n}}{n!}\right)
	\longrightarrow 0,
	\qquad
\end{equation}
\begin{equation}\label{eq:nBn}
	nB_{n}
	=O\!\left(\mu^{n}n^{\gamma}\right)
	\longrightarrow 0,
	\quad 0<\mu<1,
\end{equation}
so the single algebraic power created by the extra difference is dominated by
the beyond-all-orders decay of the coefficient. This is the scale separation on
which every statement below rests: it is the boundedness of the normalised
$(k+1)$th difference together with $nD_{n}\to0$, $nB_{n}\to0$ that suppresses
the perturbation, and not a vaguer assertion that a difference of order $k+1$
merely produces algebraic growth.

We prove the general order-$k$ statement directly. The cases $k=0$ (the
polynomials themselves) and $k=1$ (their first differences) are the special
cases $k=0,1$ of Theorems~\ref{thm:mh-diff} and~\ref{thm:mh-nablak} below;
they require no separate treatment.

\begin{theorem}[Mehler--Heine formulas for $\dif^{k}P^{A}_{n}$]
	\label{thm:mh-diff}
	Fix $k\in\N_{0}$ and $A\ge0$. Then, locally uniformly for $z\in\C$,
	\begin{align}
		\lim_{n\to\infty}\theta_n(z)\,\dif^k\CharlierA n(z)
		&=
		(-1)^k\varphi_C(z),
		\label{eq:mhdiffC}\\
		\lim_{n\to\infty}\theta_n(z)\,\dif^k\MeixnerA n(z)
		&=
		(-1)^k\varphi_M(z).
		\label{eq:mhdiffM}
	\end{align}
	In particular, for $k=0$ this gives
	$\theta_n(z)\CharlierA n(z)\to\varphi_C(z)$ and
	$\theta_n(z)\MeixnerA n(z)\to\varphi_M(z)$: for each fixed $A\ge0$ the
	modified polynomials have exactly the same Mehler--Heine limits as the
	classical ones. In the forward case the normalising sequence $\theta_{n}$ can
	be kept fixed for every $k$; this is not a generic feature of finite
	differences of discrete orthogonal polynomials, but a consequence of the
	reciprocal-Gamma normalisation \eqref{eq:theta} together with the exact
	forward-shift identities \eqref{eq:fwdCharlier}--\eqref{eq:fwdMeixner}, which
	lower the degree by one at each step while, in the Meixner case, shifting the
	parameter $\gamma\mapsto\gamma+k$; this shift is absorbed into the limiting
	function through \eqref{eq:phishiftk}.
\end{theorem}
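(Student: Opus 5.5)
Apply $\dif^{k}$ to the rank-one representation \eqref{eq:perturb}. By linearity \eqref{eq:linearity}, and because $\dif$ and $\nab$ commute, this gives
\begin{equation*}
	\dif^{k}\CharlierA n(z)=\dif^{k}\Charlier n(z)+D_{n}\,\nab\dif^{k}\Charlier n(z),
	\quad
	\dif^{k}\MeixnerA n(z)=\dif^{k}\Meixner n(z)+B_{n}\,\nab\dif^{k}\Meixner n(z).
\end{equation*}
We multiply by $\theta_n(z)$. The first term converges locally uniformly to $(-1)^k\varphi(z)$ by \eqref{eq:unifiedFwd} of Theorem~\ref{thm:higherMH}. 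The case $A=0$ is trivial, so the whole proof reduces to showing that $\theta_n(z)D_n\nab\dif^k\Charlier n(z)$ and $\theta_n(z)B_n\nab\dif^k\Meixner n(z)$ tend to $0$ locally uniformly.

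To control the perturbation term, use the identity $\nab\dif^{k}f(z)=\dif^{k+1}f(z-1)$. Then
\begin{equation*}
	\theta_n(z)\,\nab\dif^{k}P_n(z)=\frac{\theta_n(z)}{\theta_n(z-1)}\,\theta_n(z-1)\,\dif^{k+1}P_n(z-1)=(n-z)\,\theta_n(z-1)\,\dif^{k+1}P_n(z-1),
\end{equation*}
where the prefactor comes from the entire shift ratio of Lemma~\ref{lem:shiftratio}. Theorem~\ref{thm:higherMH} at order $k+1$, evaluated at the translated argument $z-1$, shows that $\theta_n(z-1)\dif^{k+1}P_n(z-1)\to(-1)^{k+1}\varphi(z-1)$ locally uniformly. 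This family is therefore uniformly bounded on each compact set $K$ for $n$ large. On $K$ we also have $|n-z|\le n+\max_K|z|$.

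The perturbation term is thus bounded on $K$ by $C_K(n+c_K)|D_n|$, respectively $C_K(n+c_K)|B_n|$. This is exactly the scale separation \eqref{eq:nDn}--\eqref{eq:nBn}, and by Lemma~\ref{lem:coeff} the bound tends to $0$. In the Meixner case, Theorem~\ref{thm:higherMH} is applied to the parameter $\gamma$ itself: the shift $\gamma\mapsto\gamma+k+1$ is already absorbed inside that theorem, so no further parameter bookkeeping is needed. Adding the two pieces gives \eqref{eq:mhdiffC}--\eqref{eq:mhdiffM}, and $k=0$ gives the stated special case.

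The only delicate point is local uniformity: one must use a bound that is uniform in $z\in K$ and not merely pointwise. Locally uniform convergence to the continuous limit $\varphi(z-1)$ gives this boundedness on compacta for $n\ge n_0(K)$, and that settles it. An alternative that avoids the shift would be to write $\nab\dif^k P_n$ through the backward-shift identities \eqref{eq:bwdCharlier}--\eqref{eq:bwdMeixner}. However, that produces mixed shifts $P_{n-k-1}(z-1)$ with their own normalisation, so the route through $\dif^{k+1}$ at $z-1$ is cleaner.
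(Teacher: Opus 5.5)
Your proof is correct. It shares the paper's overall architecture: the split $\dif^kP_n^A=\dif^kP_n+C_n\,\nab\dif^kP_n$, followed by killing the second term as a locally bounded normalised factor times $nD_n$ or $nB_n$. Where you differ is in how you control the perturbation term.

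\textbf{The paper's route.} It applies the forward and backward shift identities to write $\nab\dif^kP_n(z)=\ff{n}{k+1}P_{n-k-1}(z-1)$, with first parameter $\gamma+k+1$ in the Meixner case. It then renormalises with $\theta_{n-k-1}(z-1)$ and proves $R_{n,k}(z)\to1$ via the Gamma-ratio expansion. Finally it invokes Proposition~\ref{prop:MH} for the shifted-parameter family together with \eqref{eq:phishiftk}.

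\textbf{Your route.} You use the exact operator identity $\nab\dif^kf(z)=\dif^{k+1}f(z-1)$ and the exact entire identity $\theta_n(z)=(n-z)\,\theta_n(z-1)$. You then call the forward half of Theorem~\ref{thm:higherMH} at order $k+1$ as a black box, translated to $z-1$.

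\textbf{Comparison.} Your route is more economical in three ways:
\begin{itemize}
\item the single power of $n$ appears as the explicit factor $n-z$ rather than through an asymptotic ratio;
\item no Gamma-ratio estimate is needed;
\item the Meixner parameter shift is handled once, inside Theorem~\ref{thm:higherMH}, rather than redone here.
\end{itemize}
It also recovers the paper's intermediate limit, since $(n-z)/n\to1$ gives $\theta_n(z)\,\nab\dif^kP_n(z)/n\to(-1)^{k+1}\varphi(z-1)$ locally uniformly.

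The paper's route has two advantages of its own. It works directly from the classical Proposition~\ref{prop:MH}. It also parallels the exact-cancellation computation used for the backward Theorem~\ref{thm:mh-nablak}, where the falling-factorial normalisation makes the renormalisation exact.

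Your separate treatment of $A=0$ is appropriate, since Lemma~\ref{lem:coeff} is stated for $A>0$.
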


\begin{proof}
	Applying $\dif^k$ to \eqref{eq:perturb} and using
	$\dif\nab=\nab\dif$, we obtain
	\begin{equation}\label{eq:diffperturbC}
		\dif^kP_n^A(z)
		=
		\dif^kP_n(z)
		+
		C_n\,\nab\dif^kP_n(z),
		\quad
		C_n=D_n\ \text{or}\ B_n.
	\end{equation}
	After multiplication by $\theta_n(z)$, the classical term converges
	locally uniformly to $(-1)^k\varphi_C(z)$ or
	$(-1)^k\varphi_M(z)$, respectively, by
	Theorem~\ref{thm:higherMH}. It therefore remains to show that the
	perturbation term converges locally uniformly to zero.
	
	For the Charlier family, the forward and backward shift identities give
	\begin{equation*}
		\nab\dif^k\Charlier n(z)
		=
		[n]_{k+1}\,\Charlier{n-k-1}(z-1).
	\end{equation*}
	Since $\kappa=-1$, the definition \eqref{eq:theta} of $\theta_n$ yields
	\begin{equation*}
		\theta_n(z)
		=
		(-1)^{k+1}
		\frac{\Gamma(n-k-z)}{\Gamma(n-z)}
		\theta_{n-k-1}(z-1).
	\end{equation*}
	Hence,
	\begin{equation*}
		\frac{\theta_n(z)}{n}\,
		\nab\dif^k\Charlier n(z)
		=
		(-1)^{k+1}R_{n,k}(z)\,
		\theta_{n-k-1}(z-1)\,
		\Charlier{n-k-1}(z-1),
	\end{equation*}
	where
	\begin{equation*}
		R_{n,k}(z)
		=
		\frac{[n]_{k+1}}{n}\,
		\frac{\Gamma(n-k-z)}{\Gamma(n-z)}.
	\end{equation*}
	For fixed $k$, the Gamma-ratio expansion \eqref{eq:gammaratio}, applied with
	$a=-k-z$ and $b=-z$, gives
	\begin{equation*}
		\frac{\Gamma(n-k-z)}{\Gamma(n-z)}
		=n^{-k}\bigl(1+O(n^{-1})\bigr),
	\end{equation*}
	uniformly for $z$ in compact sets. Combining this with $[n]_{k+1}/n^{k+1}\to1$
	for fixed $k$ yields
	\begin{equation*}
		R_{n,k}(z)
		=\frac{[n]_{k+1}}{n^{k+1}}\bigl(1+O(n^{-1})\bigr)
		\longrightarrow1,
	\end{equation*}
	locally uniformly in $\C$. Therefore, Proposition~\ref{prop:MH},
	applied at the shifted argument $z-1$, gives
	\begin{equation*}
		\frac{\theta_n(z)}{n}\,
		\nab\dif^k\Charlier n(z)
		\longrightarrow
		(-1)^{k+1}\varphi_C(z-1),
	\end{equation*}
	locally uniformly in $\C$. In particular, this normalised $(k+1)$th
	difference is locally uniformly bounded. Writing the perturbation term as
	\begin{equation*}
		D_n\theta_n(z)\,\nab\dif^k\Charlier n(z)
		=
		nD_n\,
		\frac{\theta_n(z)}{n}\,
		\nab\dif^k\Charlier n(z),
	\end{equation*}
	the bounded normalised difference is multiplied by the scalar $nD_n$, which
	tends to $0$ by \eqref{eq:nDn}. Hence the perturbation term converges locally
	uniformly to zero, and \eqref{eq:mhdiffC} follows.
	
	For the Meixner family,
	\begin{equation*}
		\nab\dif^k\Meixner n(z)
		=
		[n]_{k+1}\,
		\MeixnerS{\gamma+k+1}{n-k-1}(z-1).
	\end{equation*}
	Since $\kappa=\mu-1$, we similarly obtain
	\begin{equation*}
		\frac{\theta_n(z)}{n}\,
		\nab\dif^k\Meixner n(z)
		=
		(\mu-1)^{k+1}R_{n,k}(z)\,
		\theta_{n-k-1}(z-1)\,
		\MeixnerS{\gamma+k+1}{n-k-1}(z-1).
	\end{equation*}
	Proposition~\ref{prop:MH}, applied to the Meixner family with shifted
	parameter $\gamma+k+1$, gives
	\begin{equation*}
		\theta_{n-k-1}(z-1)\,
		\MeixnerS{\gamma+k+1}{n-k-1}(z-1)
		\longrightarrow
		\varphi_M^{(\gamma+k+1)}(z-1).
	\end{equation*}
	Using \eqref{eq:phishiftk}, we deduce
	\begin{equation*}
		\varphi_M^{(\gamma+k+1)}(z-1)
		=
		\frac{\varphi_M(z-1)}{(1-\mu)^{k+1}},
	\end{equation*}
	and consequently
	\begin{equation*}
		\frac{\theta_n(z)}{n}\,
		\nab\dif^k\Meixner n(z)
		\longrightarrow
		(-1)^{k+1}\varphi_M(z-1),
	\end{equation*}
	locally uniformly in $\C$; this normalised $(k+1)$th difference is therefore
	locally uniformly bounded. Writing the perturbation term as
	\begin{equation*}
		B_n\theta_n(z)\,\nab\dif^k\Meixner n(z)
		=
		nB_n\,
		\frac{\theta_n(z)}{n}\,
		\nab\dif^k\Meixner n(z),
	\end{equation*}
	and using $nB_n\to0$ from \eqref{eq:nBn}, this perturbation term also
	converges locally uniformly to zero. Therefore, \eqref{eq:mhdiffM} follows.
\end{proof}

\begin{theorem}[Mehler--Heine formulas for higher-order backward differences]
	\label{thm:mh-nablak}
	Fix $k\in\N_{0}$ and $A\geq0$. Then, locally uniformly for $z\in\C$,
	\begin{align}
		\lim_{n\to\infty}
		\frac{\theta_{n}(z)}{\ff{n}{k}}\,
		\nab^{k}\CharlierA n(z)
		&=
		(-1)^{k}\varphi_{C}(z-k)
		=
		(-1)^{k}\frac{e^{\mu}}{\Gamma(k-z)},
		\label{eq:mhnablakC}\\
		\lim_{n\to\infty}
		\frac{\theta_{n}(z)}{\ff{n}{k}}\,
		\nab^{k}\MeixnerA n(z)
		&=
		(-1)^{k}\varphi_{M}(z-k)
		=
		(-1)^{k}
		\frac{1}
		{(1-\mu)^{\gamma+z-k}\Gamma(k-z)}.
		\label{eq:mhnablakM}
	\end{align}
	Thus, for each fixed $A\ge0$ and fixed $k\in\Nzero$, the endpoint mass
	perturbation does not alter the classical higher-order backward
	Mehler--Heine limits. In both cases, the limiting function is the
	corresponding classical one evaluated at the shifted argument $z-k$,
	multiplied by $(-1)^k$. In the Meixner case, the parameter shift
	$\gamma\mapsto\gamma+k$ is absorbed into the limiting function. For $k=0$
	these reduce to the polynomial case of Theorem~\ref{thm:mh-diff}, and for
	$k=1$ to the corresponding first-order backward formulas.
\end{theorem}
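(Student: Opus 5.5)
The proof follows the same pattern as Theorem~\ref{thm:mh-diff}: split the modified polynomial into classical part plus a perturbation, get the classical limit from Theorem~\ref{thm:higherMH}, and show the perturbation is killed by the decay of the coefficient. Applying $\nab^k$ to \eqref{eq:perturb} and using linearity \eqref{eq:linearity} gives, with $C_n=D_n$ or $B_n$,
\begin{equation*}
	\nab^kP_n^A(z)=\nab^kP_n(z)+C_n\,\nab^{k+1}P_n(z).
\end{equation*}
Multiply by $\theta_n(z)/\ff{n}{k}$. For the classical term we use the exact cancellation recorded after Theorem~\ref{thm:higherMH}:
\begin{equation*}
	\frac{\theta_n(z)}{\ff{n}{k}}\nab^kP_n(z)=\kappa^k\theta_{n-k}(z-k)P_{n-k}(z-k).
\end{equation*}
Proposition~\ref{prop:MH} applies at the translated argument $z-k$, with parameter $\gamma+k$ in the Meixner case. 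Together with \eqref{eq:phishiftk}, this gives the limit $(-1)^k\varphi(z-k)$, locally uniformly. The explicit forms $e^{\mu}/\Gamma(k-z)$ and $1/((1-\mu)^{\gamma+z-k}\Gamma(k-z))$ then follow by substituting $z-k$ into \eqref{eq:phis}.

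For the perturbation term we use the backward-shift identities \eqref{eq:bwdCharlier}--\eqref{eq:bwdMeixner} with $k+1$ in place of $k$, valid once $n\ge k+1$. Since $\ff{n}{k+1}=(n-k)\ff{n}{k}$,
\begin{equation*}
	C_n\frac{\theta_n(z)}{\ff{n}{k}}\nab^{k+1}P_n(z)=(n-k)C_n\,\theta_n(z)\,\widetilde P_{n-k-1}(z-k-1),
\end{equation*}
where $\widetilde P$ is the Charlier polynomial, or the Meixner polynomial with parameter $\gamma+k+1$. By the second identity in \eqref{eq:thetareck}, with $k+1$ in place of $k$, we have $\theta_n(z)=\kappa^{k+1}\theta_{n-k-1}(z-k-1)$ exactly. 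The product is therefore
\begin{equation*}
	(n-k)C_n\,\kappa^{k+1}\,\theta_{n-k-1}(z-k-1)\,\widetilde P_{n-k-1}(z-k-1).
\end{equation*}
By Proposition~\ref{prop:MH} at argument $z-k-1$, the last factor converges locally uniformly to an entire function, $\varphi(z-k-1)$ up to the constant $(1-\mu)^{-(k+1)}$ in the Meixner case. A locally uniformly convergent sequence is locally uniformly bounded, so this factor is bounded on compacts. The scalar prefactor satisfies $|(n-k)C_n|\le nC_n\to0$ by \eqref{eq:nDn}--\eqref{eq:nBn}. Hence the perturbation term tends to zero locally uniformly, and the theorem follows. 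The case $A=0$ is trivial, since then $C_n=0$.

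The only step needing care is the bookkeeping of normalisations in the perturbation term. Because the order-$k+1$ difference is normalised by $\ff{n}{k}$ rather than $\ff{n}{k+1}$, exactly one algebraic factor $n-k$ is left over. One must check that the argument shift and the $\kappa$-power align so that no Gamma ratio of growing size appears. The second identity in \eqref{eq:thetareck} makes this exact, so the leftover is precisely $(n-k)C_n$, which Lemma~\ref{lem:coeff} controls. The cases $k=0,1$ are immediate specialisations; $k=0$ reproduces Theorem~\ref{thm:mh-diff} with $k=0$.
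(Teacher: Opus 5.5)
Your proposal is correct and follows the paper's proof essentially step for step. Both apply $\nab^k$ to \eqref{eq:perturb} to split off the classical term, use the exact cancellation $\theta_n(z)\nab^kP_n(z)/\ff{n}{k}=\kappa^k\theta_{n-k}(z-k)P_{n-k}(z-k)$ for that term, and use the order-$(k+1)$ backward-shift identity to isolate the scalar $(n-k)C_n\le nC_n\to0$ multiplying a locally uniformly convergent, hence locally bounded, normalised factor.
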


\begin{proof}
	Applying $\nab^k$ to the perturbation representation
	\eqref{eq:perturb} and using linearity \eqref{eq:linearity} gives
	\begin{equation}\label{eq:nablak-split}
		\begin{cases}
			\nab^{k}\CharlierA n(z)
			=
			\nab^{k}\Charlier n(z)
			+
			D_{n}\,\nab^{k+1}\Charlier n(z),\\\\
			\nab^{k}\MeixnerA n(z)
			=
			\nab^{k}\Meixner n(z)
			+
			B_{n}\,\nab^{k+1}\Meixner n(z).
		\end{cases}
	\end{equation}	
	We first consider the Charlier family. Since $\kappa=-1$, the
	higher-order backward-shift identity \eqref{eq:bwdCharlier} and the
	second relation in \eqref{eq:thetareck} yield
	\begin{equation*}
		\frac{\theta_n(z)}{\ff{n}{k}}\,
		\nab^k\Charlier n(z)
		=
		(-1)^k
		\theta_{n-k}(z-k)\,
		\Charlier{n-k}(z-k).
	\end{equation*}
	Proposition~\ref{prop:MH}, applied at the shifted argument $z-k$,
	then gives
	\begin{equation*}
		\frac{\theta_n(z)}{\ff{n}{k}}\,
		\nab^k\Charlier n(z)
		\longrightarrow
		(-1)^k\varphi_C(z-k),
	\end{equation*}
	locally uniformly in $\C$.
	
	For the perturbation term, the backward-shift identity of order
	$k+1$ gives
	\begin{align*}
		\frac{D_n\theta_n(z)}{\ff{n}{k}}\,
		\nab^{k+1}\Charlier n(z)
		&=
		(n-k)D_n\,(-1)^{k+1}
		\theta_{n-k-1}(z-k-1)
		\Charlier{n-k-1}(z-k-1).
	\end{align*}
	The normalised polynomial factor converges locally uniformly to
	$\varphi_C(z-k-1)$ by Proposition~\ref{prop:MH}, and is therefore locally
	uniformly bounded; the scalar prefactor is $(n-k)D_n$, which tends to $0$
	since $(n-k)D_n\le nD_n\to0$ by \eqref{eq:nDn}. Hence the perturbation term
	converges locally uniformly to zero. Combining both terms in
	\eqref{eq:nablak-split}, we obtain
	\begin{equation*}
		\frac{\theta_n(z)}{\ff{n}{k}}\,
		\nab^k\CharlierA n(z)
		\longrightarrow
		(-1)^k\varphi_C(z-k)
		=
		(-1)^k\frac{e^\mu}{\Gamma(k-z)},
	\end{equation*}
	which proves \eqref{eq:mhnablakC}.
	
	We now consider the Meixner family. Since $\kappa=\mu-1$,
	\eqref{eq:bwdMeixner} and the second relation in
	\eqref{eq:thetareck} give
	\begin{equation*}
		\frac{\theta_n(z)}{\ff{n}{k}}\,
		\nab^k\Meixner n(z)
		=
		(\mu-1)^k
		\theta_{n-k}(z-k)\,
		\MeixnerS{\gamma+k}{n-k}(z-k).
	\end{equation*}
	Applying Proposition~\ref{prop:MH} to the Meixner family with
	shifted parameter $\gamma+k$ yields
	\begin{equation*}
		\theta_{n-k}(z-k)\,
		\MeixnerS{\gamma+k}{n-k}(z-k)
		\longrightarrow
		\varphi_M^{(\gamma+k)}(z-k).
	\end{equation*}
	Using \eqref{eq:phishiftk}, we have
	\begin{equation*}
		(\mu-1)^k\varphi_M^{(\gamma+k)}(z-k)
		=
		(-1)^k\varphi_M(z-k).
	\end{equation*}
	Therefore,
	\begin{equation*}
		\frac{\theta_n(z)}{\ff{n}{k}}\,
		\nab^k\Meixner n(z)
		\longrightarrow
		(-1)^k\varphi_M(z-k),
	\end{equation*}
	locally uniformly in $\C$.
	
	For the perturbation term,
	\begin{align*}
		\frac{B_n\theta_n(z)}{\ff{n}{k}}\,
		\nab^{k+1}\Meixner n(z)
		&=
		(n-k)B_n\,(\mu-1)^{k+1}
		\theta_{n-k-1}(z-k-1)
		\MeixnerS{\gamma+k+1}{n-k-1}(z-k-1).
	\end{align*}
	By Proposition~\ref{prop:MH}, the normalised Meixner factor converges
	locally uniformly to $\varphi_M^{(\gamma+k+1)}(z-k-1)$, and is therefore
	locally uniformly bounded; the scalar prefactor $(n-k)B_n\le nB_n\to0$ by
	\eqref{eq:nBn}. Hence the perturbation term also converges locally uniformly
	to zero, and consequently
	\begin{equation*}
		\frac{\theta_n(z)}{\ff{n}{k}}\,
		\nab^k\MeixnerA n(z)
		\longrightarrow
		(-1)^k\varphi_M(z-k)
		=
		(-1)^k
		\frac{1}
		{(1-\mu)^{\gamma+z-k}\Gamma(k-z)}.
	\end{equation*}
	This proves \eqref{eq:mhnablakM}.
\end{proof}

\section{A first-order functional equation for the common Mehler--Heine limit}
\label{sec:functional}

We close the asymptotic analysis with a complementary observation about the
common Mehler--Heine limit. It concerns the limiting function itself and
follows directly from its explicit reciprocal-Gamma representation; no
asymptotic reduction of the second-order difference equation satisfied by the
finite-degree modified polynomials is required, and the result does not depend
on the Uvarov structure, since the limit coincides with the classical one. Its
modest added value over the explicit representation is organisational: it
exhibits the family-dependent one-step shift satisfied by the limit, shows that
the forward-difference order $k$ enters only through the global factor $(-1)^k$
(which cancels from the equation), and identifies, among all entire solutions of
that shift equation, the specific one selected by the Mehler--Heine
normalisation. Whether the same first-order functional equation can also be
recovered as a genuine asymptotic reduction of the second-order difference
equation, after a suitable normalisation and analysis of its coefficients,
remains an interesting open question.

\begin{theorem}[First-order functional equation for the limit]
	\label{thm:reduced}
	Let
	\begin{equation*}
		\Phi_k(z)=(-1)^k\varphi_C(z)
		\quad\text{or}\quad
		\Phi_k(z)=(-1)^k\varphi_M(z),
	\end{equation*}
	be the common Mehler--Heine limit of the $k$th-order forward differences
	in Theorem~\ref{thm:mh-diff}. Then $\Phi_k$ satisfies
	\begin{equation}\label{eq:reducedC}
		z\,\Phi_k(z-1)+\Phi_k(z)=0,
		\quad\text{(Charlier)},
	\end{equation}
	or, respectively,
	\begin{equation}\label{eq:reducedM}
		z\,\Phi_k(z-1)+(1-\mu)\Phi_k(z)=0,
		\quad\text{(Meixner)},
	\end{equation}
	for all $z\in\C$.
	
	Moreover, the entire solutions of \eqref{eq:reducedC} are precisely
	\begin{equation*}
		F(z)=\frac{p(z)}{\Gamma(-z)},
	\end{equation*}
	whereas the entire solutions of \eqref{eq:reducedM} are precisely
	\begin{equation*}
		F(z)=
		\frac{p(z)(1-\mu)^{-z}}{\Gamma(-z)},
	\end{equation*}
	where, in both cases, $p$ is an arbitrary one-periodic entire function. Here,
	and throughout this section, the power $(1-\mu)^{z}$ is understood as the
	entire function
	\begin{equation*}
		(1-\mu)^{z}=\exp\!\bigl(z\,\log(1-\mu)\bigr),
		\quad 0<\mu<1,
	\end{equation*}
	with $\log(1-\mu)$ the real logarithm of the positive number $1-\mu$; this
	fixes a single-valued branch and removes any ambiguity in the factor
	$(1-\mu)^{-z}$ above.
\end{theorem}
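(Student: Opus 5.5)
First I will verify the two shift equations directly from the explicit reciprocal-Gamma form in \eqref{eq:phis}. The factor $(-1)^k$ is a global constant, and both \eqref{eq:reducedC} and \eqref{eq:reducedM} are linear and homogeneous, so it suffices to treat $k=0$.

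For Charlier, $\varphi_C(z-1)=e^{\mu}/\Gamma(1-z)$. Since $\Gamma(1-z)=(-z)\Gamma(-z)$ as meromorphic functions, we get $z\varphi_C(z-1)=-e^{\mu}/\Gamma(-z)=-\varphi_C(z)$. To avoid any issue at the poles of $\Gamma$, I will phrase this through the entire identity $1/\Gamma(-z)=-z/\Gamma(1-z)$; it follows from $1/\Gamma(w)=w/\Gamma(w+1)$, an identity between entire functions. The relation then holds for all $z\in\C$.

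For Meixner, $\varphi_M(z-1)=(1-\mu)\,(1-\mu)^{-\gamma-z}/\Gamma(1-z)$, where I use the exponential convention for $(1-\mu)^{z}$ fixed in the statement, so that $(1-\mu)^{-(z-1)}=(1-\mu)(1-\mu)^{-z}$ exactly. Applying the same entire identity gives $z\varphi_M(z-1)=-(1-\mu)\varphi_M(z)$.

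For the characterization of the entire solution space, I first check that every $F$ of the stated form is an entire solution. Such an $F$ is entire because $p$, $1/\Gamma(-z)$ and $(1-\mu)^{-z}$ are entire. Periodicity gives $p(z-1)=p(z)$, so the verification above applies verbatim with $e^{\mu}$ replaced by $p(z)$.

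Conversely, let $F$ be an entire solution of \eqref{eq:reducedC}, and define $p(z)=\Gamma(-z)F(z)$. This $p$ is meromorphic, with possible poles only at $z\in\Nzero$. It satisfies $p(z)-p(z-1)=\Gamma(-z)\bigl(F(z)+zF(z-1)\bigr)=0$ away from the poles, again using $\Gamma(1-z)=-z\Gamma(-z)$. So $p$ is a one-periodic meromorphic function. Its singular set is a subset of $\Nzero$ that is invariant under the translations $z\mapsto z\pm1$, and no nonempty subset of $\Nzero$ is invariant under $z\mapsto z-1$. Hence $p$ has no poles, i.e., $p$ is entire.

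Concretely, suppose $p$ had a pole at $m\in\Nzero$. Periodicity would force a pole at $-1\notin\Nzero$, where $\Gamma(-z)$ is finite and $F$ is entire. That is a contradiction.

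For the Meixner equation \eqref{eq:reducedM}, I reduce to the Charlier case. Set $G(z)=(1-\mu)^{z}F(z)$, which is entire. Then $zG(z-1)+G(z)=(1-\mu)^{z-1}\bigl(zF(z-1)+(1-\mu)F(z)\bigr)=0$, so $G=p/\Gamma(-z)$ by the Charlier case, and $F=p(z)(1-\mu)^{-z}/\Gamma(-z)$.

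Finally, I note which periodic factor corresponds to the limits. For $\Phi_k$ it is the constant $p\equiv(-1)^k e^{\mu}$ in the Charlier case and $p\equiv(-1)^k(1-\mu)^{-\gamma}$ in the Meixner case.

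The only delicate step is the converse: the removability argument for the poles of $p$. It hinges on the translation-invariance of the singular set combined with the fact that the poles of $\Gamma(-z)$ lie on the half-lattice $\Nzero$. I will write that argument explicitly and keep every other identity as a relation between entire functions.
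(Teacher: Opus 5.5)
Your proof is correct, and it differs from the paper's in one step.

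\textbf{Shared parts.} The verification of \eqref{eq:reducedC}--\eqref{eq:reducedM} and the sufficiency direction are the same as in the paper. Both rest on the entire identity $1/\Gamma(-z)=-z/\Gamma(1-z)$.

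\textbf{The removability step.} The paper first shows that every entire solution vanishes on $\Nzero$: setting $z=0$ gives $F(0)=0$, and the relation $F(n)=-nF(n-1)$ propagates this to all $n\in\Nzero$. Because the poles of $\Gamma(-z)$ are simple, $\Gamma(-z)F(z)$ then has removable singularities on $\Nzero$. Periodicity, known on $\C\setminus\Nzero$, extends by analytic continuation.

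You never evaluate $F$ on $\Nzero$. Instead you treat $p(z)=p(z-1)$ as an identity of meromorphic functions. Its polar set $P\subseteq\Nzero$ then satisfies $P=P+1$, which no nonempty subset of a set bounded below can do. Concretely, a pole at $m$ would force one at $-1$, where $\Gamma(-z)=1$ and $p$ is holomorphic.

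\textbf{Comparison.} Your argument is slightly more structural. It does not use the order of the poles of $\Gamma$, and it applies whenever the singularities of the normalising factor lie on a one-sided lattice. The paper's argument is more hands-on and gives directly the by-product that every entire solution vanishes on $\Nzero$. In your route this follows only afterwards, from $F=p/\Gamma(-z)$.

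\textbf{Meixner case.} Your reduction to the Charlier equation via $G(z)=(1-\mu)^{z}F(z)$ is equivalent to the paper's rerun of the argument with $p(z)=(1-\mu)^{z}\Gamma(-z)F(z)$. It is just organised more economically. Your identification of the constant periodic factors, $(-1)^k e^{\mu}$ and $(-1)^k(1-\mu)^{-\gamma}$, is also correct.
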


\begin{proof}
	For the Charlier family,
	\begin{equation*}
		\Phi_k(z)
		=
		(-1)^k\frac{e^\mu}{\Gamma(-z)}.
	\end{equation*}
	The Gamma functional equation
	\begin{equation*}
		\Gamma(1-z)
		=
		-z\,\Gamma(-z),
	\end{equation*}
	is equivalently written as
	\begin{equation*}
		\frac{z}{\Gamma(1-z)}
		=
		-\frac{1}{\Gamma(-z)}.
	\end{equation*}
	Therefore,
	\begin{equation*}
		z\,\Phi_k(z-1)
		=
		(-1)^k e^\mu
		\frac{z}{\Gamma(1-z)}
		=
		-\Phi_k(z),
	\end{equation*}
	which proves \eqref{eq:reducedC} for all $z\in\C$.
	
	For the Meixner family,
	\begin{equation*}
		\Phi_k(z)
		=
		(-1)^k
		\frac{1}
		{(1-\mu)^{\gamma+z}\Gamma(-z)}.
	\end{equation*}
	Hence,
	\begin{align*}
		z\,\Phi_k(z-1)
		&=
		(-1)^k
		\frac{z}
		{(1-\mu)^{\gamma+z-1}\Gamma(1-z)}
		\\
		&=
		-(1-\mu)\Phi_k(z),
	\end{align*}
	which proves \eqref{eq:reducedM}.
	
	We now characterise the entire solution spaces. Let $F$ be an entire
	solution of \eqref{eq:reducedC} and define
	\begin{equation*}
		p(z)=\Gamma(-z)F(z),
		\quad z\notin\N_0.
	\end{equation*}
	Using
	\begin{equation*}
		F(z)=-zF(z-1),
	\end{equation*}
	together with the Gamma functional equation gives
	\begin{align*}
		p(z)
		&=
		\Gamma(-z)F(z)
		\\
		&=
		-z\,\Gamma(-z)F(z-1)
		\\
		&=
		\Gamma(1-z)F(z-1)
		=
		p(z-1).
	\end{align*}
	Thus $p$ is one-periodic on $\C\setminus\N_0$, where it is initially defined.

	It remains to examine the points of $\N_0$, where $\Gamma(-z)$ has poles and
	$p$ is a priori undefined. Setting $z=0$ in \eqref{eq:reducedC} gives
	$F(0)=0$, and evaluating the relation $F(z)=-zF(z-1)$ successively at
	$z=1,2,\ldots$ yields
	\begin{equation*}
		F(n)=0,
		\quad n\in\N_0.
	\end{equation*}
	The poles of $\Gamma(-z)$ at the nonnegative integers are all simple; since
	$F$ has a zero at each such point, the product $p(z)=\Gamma(-z)F(z)$ has only
	removable singularities on $\N_0$. Removing them defines $p$ as an entire
	function, and the periodicity $p(z)=p(z-1)$, already established on the dense
	open set $\C\setminus\N_0$, extends to all of $\C$ by analytic continuation.
	Hence $p$ is an entire one-periodic function. Conversely, if $p$ is any
	one-periodic entire function, then
	\begin{equation*}
		F(z)=\frac{p(z)}{\Gamma(-z)},
	\end{equation*}
	is entire and satisfies \eqref{eq:reducedC}.
	
	For the Meixner equation, let $F$ be an entire solution of
	\eqref{eq:reducedM} and define
	\begin{equation*}
		p(z)
		=
		(1-\mu)^z\Gamma(-z)F(z),
		\quad z\notin\N_0.
	\end{equation*}
	Since
	\begin{equation*}
		F(z)
		=
		-\frac{z}{1-\mu}F(z-1),
	\end{equation*}
	the same computation gives
	\begin{equation*}
		p(z)=p(z-1).
	\end{equation*}
	on $\C\setminus\N_0$. Moreover, \eqref{eq:reducedM} implies successively that
	$F(n)=0$ for every $n\in\N_0$; as before, the simple poles of $\Gamma(-z)$ are
	cancelled by these zeros, so $p$ has removable singularities there, extends to
	an entire function, and its periodicity extends from $\C\setminus\N_0$ to all
	of $\C$ by analytic continuation. Hence $p$ is entire and one-periodic.
	Conversely,
	every one-periodic entire function $p$ yields an entire solution
	\begin{equation*}
		F(z)
		=
		\frac{p(z)(1-\mu)^{-z}}{\Gamma(-z)}.
	\end{equation*}
	This completes the proof.
\end{proof}

Equations \eqref{eq:reducedC}--\eqref{eq:reducedM} distinguish the Charlier
and Meixner limiting functions. The coefficient of the shifted term is the
same, whereas the coefficient of $\Phi_k(z)$ is $1$ for Charlier and $1-\mu$
for Meixner.
The order $k$ enters the forward-difference limit only through the global
factor $(-1)^k$, which cancels from the functional equation. Consequently,
all fixed-order forward-difference limits, classical or mass-modified,
satisfy the same family-dependent first-order shift equation.

The one-periodic factor in the general entire solution reflects the
freedom inherent in a first-order shift equation. In the present
Mehler--Heine setting, this freedom is fixed by the particular
reciprocal-Gamma functions selected by the asymptotic normalisation,
namely
\begin{equation*}
	\frac{e^\mu}{\Gamma(-z)}
	\quad\text{and}\quad
	\frac{1}
	{(1-\mu)^{\gamma+z}\Gamma(-z)},
\end{equation*}
for the Charlier and Meixner families, respectively.

\section{Graphical illustration of the Mehler--Heine formulas}
\label{sec:GraphicalMHF}

We illustrate the higher-order Mehler--Heine formulas established in
Theorems~\ref{thm:mh-diff} and~\ref{thm:mh-nablak} for the endpoint
Uvarov-modified Charlier and Meixner families. The purpose of the numerical
experiments is to visualize how the appropriately scaled finite-degree functions
approach their limiting profiles as $n$ increases. We consider $k=2$ and $k=3$
as representative higher-order cases, although the analytical results hold for
every fixed $k\in\Nzero$.

\subsection{Numerical setup and reproducibility}
\label{subsec:numsetup}

All quantities are computed from the closed-form data of the paper, and the
computation is fully reproducible from the parameters given below. The source
code used to generate all graphical and numerical illustrations presented in
this section is publicly available at
\url{https://github.com/asorial/mehler-heine-finite-differences-code}.
We record here the discretization, the evaluation scheme for the polynomials,
the zero computation, and the arithmetic safeguards, so that every figure can
be regenerated exactly.

The monic classical polynomials $\Charlier n$ and $\Meixner n$ are evaluated
through their terminating hypergeometric representations, accumulated by the
stable $O(n)$ forward recurrence on the series coefficients rather than by
forming monomial coefficients, which avoids the ill-conditioning of the power
basis. The modified polynomials are then obtained from the exact rank-one
formula \eqref{eq:perturb}, $P_n^{A}=P_n+c_n\nab P_n$ with $c_n=D_n$ (Charlier)
or $c_n=B_n$ (Meixner) taken from their closed forms \eqref{eq:Dn}--\eqref{eq:Bn}.
Forward and backward finite differences of order $k$ are formed from the exact
binomial combinations
\begin{equation*}
	\dif^{k}P(z)=\sum_{i=0}^{k}(-1)^{k-i}\binom{k}{i}P(z+i),
	\qquad
	\nab^{k}P(z)=\sum_{i=0}^{k}(-1)^{i}\binom{k}{i}P(z-i),
\end{equation*}
and the reciprocal-Gamma normalisation $\theta_n$ of \eqref{eq:theta} and the
limit functions of \eqref{eq:phis} are evaluated with the complex Gamma function.

For the complex-plane portraits of Section~\ref{subsec:portraits} we use the
rectangular windows $z=x+iy$ with $x\in[-4,6]$ (Charlier) and $x\in[-3.5,5]$
(Meixner), and $y\in[-1.3,1.3]$ in both cases, discretized on a uniform
$1000\times300$ grid ($N_x=1000$ points in the real direction, $N_y=300$ in the
imaginary direction). On this grid we compute $\log_{10}|F(z)|$; non-finite
entries are mapped to the clip bounds ($|F|\to0$, giving $-\infty$, to the lower
bound, overflow to the upper bound), and the field is truncated to the interval
$[-3,3]$ before colouring, which also fixes the range of the superimposed level
curves $\{-1,0,1\}$. The degrees shown are $n=10,25,50$, together with the
limiting panel. At these degrees IEEE double precision is amply sufficient; we
verified this by recomputing the scaled quantities in $30$-digit multiprecision
arithmetic (\texttt{mpmath}) over the same windows, obtaining agreement to
relative error below $10^{-13}$ wherever $|F|>10^{-12}$, and agreement of the
polynomial cores to about $10^{-15}$ against $50$-digit references. This confirms
the absence of overflow, underflow, or catastrophic cancellation in the
portraits, as expected: the reciprocal-Gamma normalisation keeps every scaled
quantity of order one.

Two kinds of zeros are marked. The \emph{predicted limiting zeros} are the
integer lattice points of the corresponding limit ($\Nzero$ for the forward
panels, $k+\Nzero$ for the backward panels) that fall inside the window; they are
drawn as open circles, with crosses at the excluded points $0,1,\dots,k-1$ in the
backward case. The \emph{computed finite-degree zeros} are the real zeros of the
underlying (normalisation-free) finite-degree difference, located by scanning the
real segment of the window on a fine grid for sign changes and refining each
bracket by Brent's method to a tolerance of $10^{-13}$, followed by
de-duplication of roots closer than $10^{-6}$; they are drawn as filled dots.
Only zeros lying inside the displayed window are plotted. Working with the
normalisation-free difference for the root search avoids any spurious influence of
the poles or zeros of $\theta_n$ on the sign pattern.

For the real-axis experiment of Section~\ref{subsec:realaxis} the degrees reach
$n=250$ and the mass is the large value $A=100$. At these degrees a direct
double-precision evaluation of $\theta_n(z)=\kappa^{n}/\Gamma(n-z)$ would
overflow (through $\Gamma(n-z)$) or lose accuracy through cancellation between
the polynomial and its backward difference. We therefore carry out the entire
real-axis computation in multiprecision arithmetic (\texttt{mpmath}, $30$ working
digits): the polynomials are evaluated from their hypergeometric forms, the
reciprocal Gamma factor is computed as $\kappa^{n}\,\mathrm{rgamma}(n-z)$ using
the reciprocal-Gamma routine (so that no large $\Gamma$ value is ever formed),
and the coefficients $D_n$, $B_n$ are evaluated from their closed forms with the
kernel sums accumulated in the same precision. The scaled backward differences
$\theta_n(z)\nab^{k}P^{A}_{n}(z)/\ff{n}{k}$ then remain of order one throughout,
and no overflow, underflow, or catastrophic cancellation occurs. As a further
check, we verified at representative degrees that increasing the precision to
$50$ digits leaves the plotted curves unchanged to plotting accuracy.

\subsection{Complex-plane portraits}
\label{subsec:portraits}

For a complex-valued function $F$, with $z=x+iy$, each portrait represents $\log_{10}|F(z)|$ over the indicated rectangular region of the complex plane. The colour scale
is centred at $\log_{10}|F(z)|=0$ and clipped to the interval $[-3,3]$ in
order to provide a common visual scale for all panels. Blue tones correspond
to regions where the modulus is small, white corresponds approximately to
$|F(z)|=1$, and red tones indicate regions where the modulus is large.
Superimposed black curves represent the level sets
\begin{equation*}
	\log_{10}|F(z)|=-1,\quad
	\log_{10}|F(z)|=0,\quad
	\log_{10}|F(z)|=1,
\end{equation*}
with the level $-1$ drawn as a dashed curve and the levels $0$ and $1$ as
solid curves. These level curves make it possible to compare not only the
magnitude of the functions but also the geometry of their complex-plane
profiles.

In each figure, the first three panels correspond to $n=10$, $n=25$, and
$n=50$, while the fourth panel represents the limiting function provided by
the corresponding Mehler--Heine theorem. The panels should therefore be read
from left to right. Convergence is reflected graphically by the progressive
agreement of the colour fields, low-modulus regions, and level curves of the
finite-degree functions with those of the limiting panel. We use $\mu=0.4$
and $A=1.5$ for both families and $\gamma=2.3$ for the Meixner family.

For the forward differences, the quantities represented are
\begin{equation*}
	\theta_n(z)\,\dif^k\CharlierA n(z)
	\quad\text{and}\quad
	\theta_n(z)\,\dif^k\MeixnerA n(z).
\end{equation*}
According to Theorem~\ref{thm:mh-diff},
\begin{equation*}
	\theta_n(z)\,\dif^k\CharlierA n(z)
	\longrightarrow
	(-1)^k\varphi_C(z),
	\quad
	\theta_n(z)\,\dif^k\MeixnerA n(z)
	\longrightarrow
	(-1)^k\varphi_M(z),
\end{equation*}
locally uniformly in $\C$.

Figures~\ref{fig:portraits-forward-k2} and
\ref{fig:portraits-forward-k3} show this approximation for $k=2$ and $k=3$,
respectively. For the smaller value $n=10$, visible differences remain between
the finite-degree portrait and the limiting panel. At $n=25$ the global colour
distribution and the level curves are already closer to the limiting
geometry, and for $n=50$ the agreement becomes more pronounced. This
progressive stabilization of the graphical representation provides a direct
visual illustration of the convergence established in
Theorem~\ref{thm:mh-diff}.

\begin{figure}[htbp]
	\centering
	\begin{subfigure}{\textwidth}
		\centering
		\includegraphics[width=\linewidth]{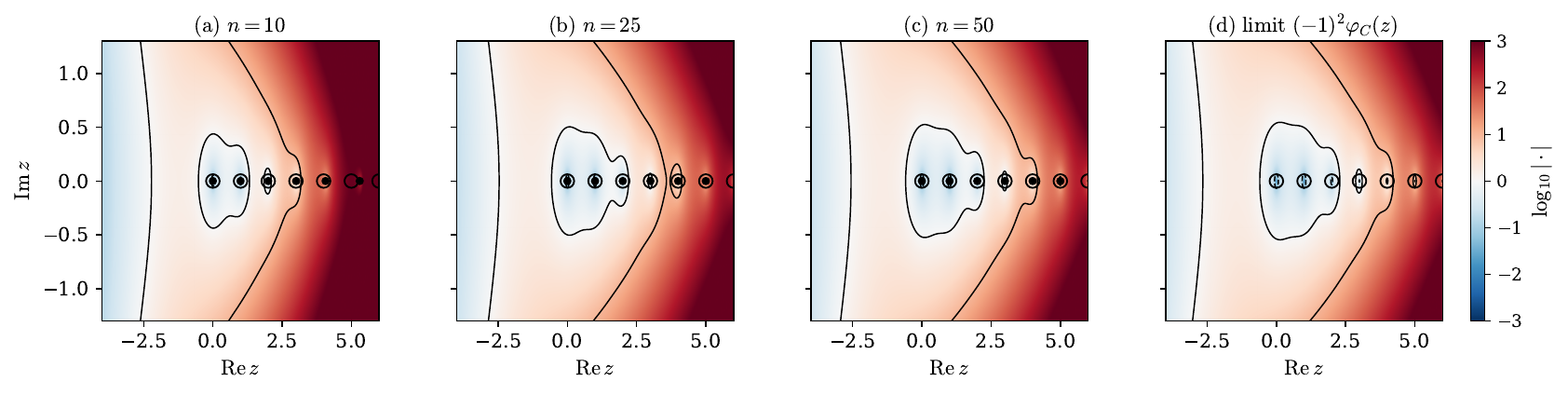}
		\caption{Charlier:
			$\log_{10}\!\big|\theta_n(z)\,\dif^{k}\CharlierA n(z)\big|$,
			with limiting profile $(-1)^{k}\varphi_C(z)$.}
		\label{fig:portraits-forward-k2-charlier}
	\end{subfigure}\\[1.2ex]
	\begin{subfigure}{\textwidth}
		\centering
		\includegraphics[width=\linewidth]{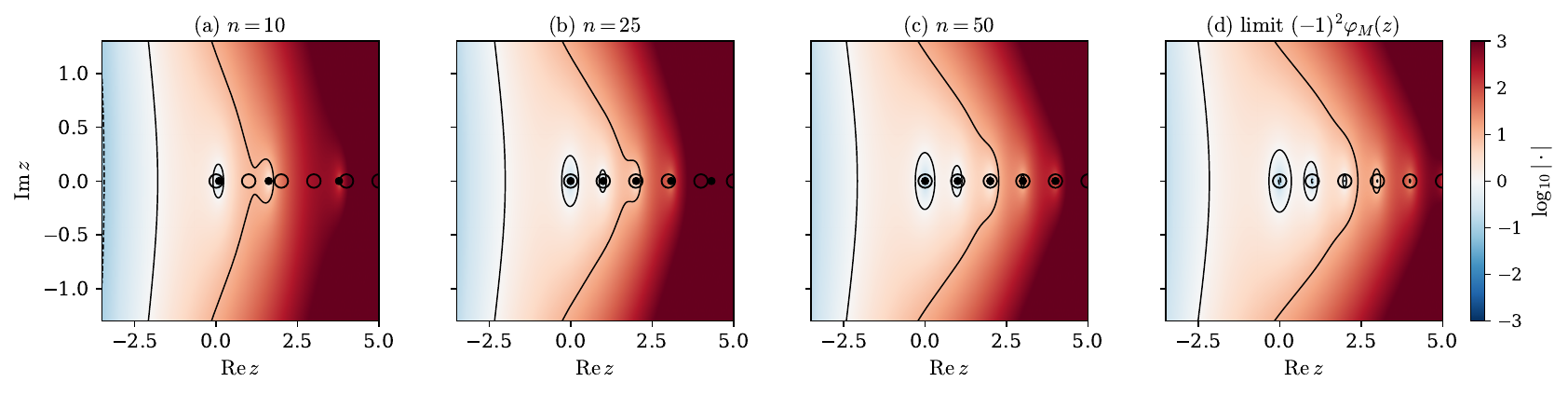}
		\caption{Meixner:
			$\log_{10}\!\big|\theta_n(z)\,\dif^{k}\MeixnerA n(z)\big|$,
			with limiting profile $(-1)^{k}\varphi_M(z)$.}
		\label{fig:portraits-forward-k2-meixner}
	\end{subfigure}
	\caption{Complex-plane log-modulus portraits of the scaled forward
		differences for $k=2$, with $\mu=0.4$, $A=1.5$, and $\gamma=2.3$ in
		the Meixner case. From left to right, the panels correspond to
		$n=10,25,50$ and to the limiting function from
		Theorem~\ref{thm:mh-diff}. The colour represents the clipped value of
		$\log_{10}|F(z)|$ on $[-3,3]$, and the black curves are the level sets
		$\log_{10}|F(z)|=-1,0,1$. As $n$ increases, both the colour field and
		the level-set geometry progressively approach those of the limiting
		panel.}
	\label{fig:portraits-forward-k2}
\end{figure}

\begin{figure}[htbp]
	\centering
	\begin{subfigure}{\textwidth}
		\centering
		\includegraphics[width=\linewidth]{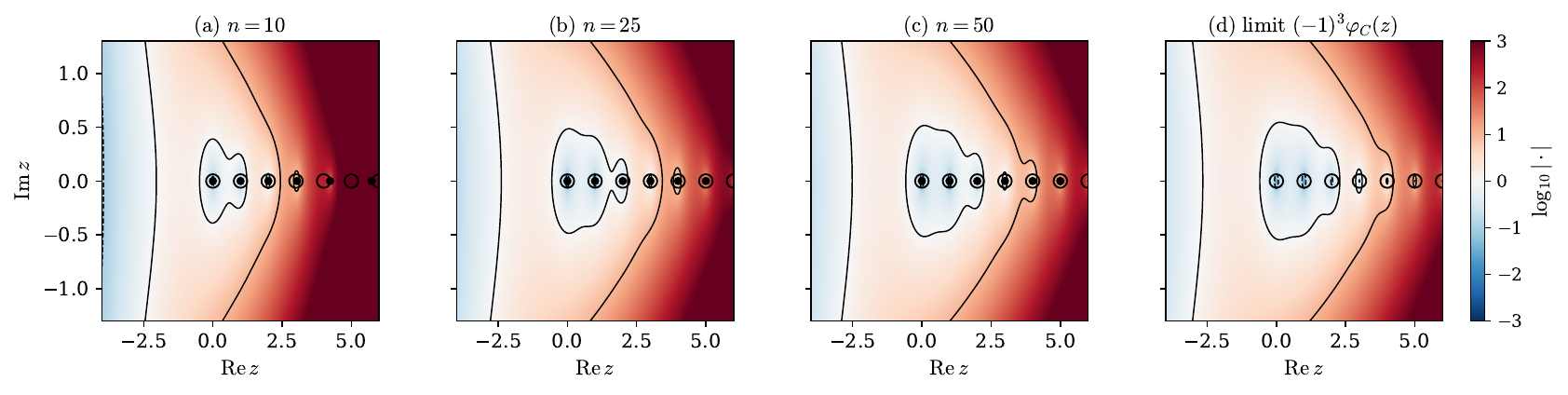}
		\caption{Charlier:
			$\log_{10}\!\big|\theta_n(z)\,\dif^{k}\CharlierA n(z)\big|$,
			with limiting profile $(-1)^{k}\varphi_C(z)$.}
		\label{fig:portraits-forward-k3-charlier}
	\end{subfigure}\\[1.2ex]
	\begin{subfigure}{\textwidth}
		\centering
		\includegraphics[width=\linewidth]{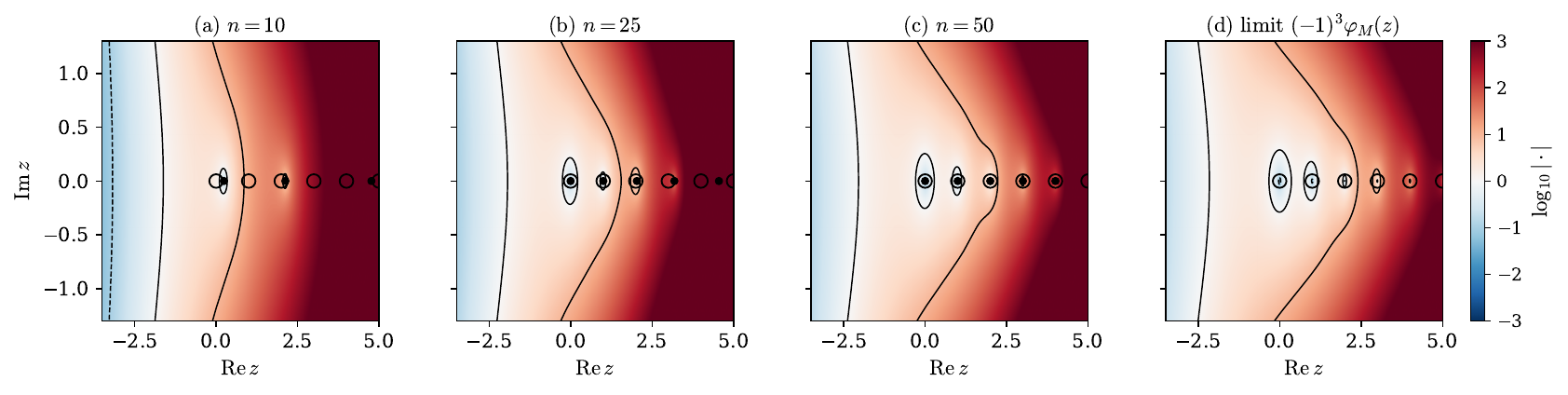}
		\caption{Meixner:
			$\log_{10}\!\big|\theta_n(z)\,\dif^{k}\MeixnerA n(z)\big|$,
			with limiting profile $(-1)^{k}\varphi_M(z)$.}
		\label{fig:portraits-forward-k3-meixner}
	\end{subfigure}
	\caption{Complex-plane log-modulus portraits of the scaled forward
		differences for $k=3$. The parameters are $\mu=0.4$, $A=1.5$, and
		$\gamma=2.3$ in the Meixner case. The panels correspond to
		$n=10,25,50$ and to the limiting function from
		Theorem~\ref{thm:mh-diff}. The progressive agreement of the colour
		distribution and the level curves with the fourth panel illustrates the
		approach to the Mehler--Heine limit as $n$ increases.}
	\label{fig:portraits-forward-k3}
\end{figure}

For the backward differences, the quantities represented are
\begin{equation*}
	\frac{\theta_n(z)}{\ff{n}{k}}\,\nab^k\CharlierA n(z)
	\quad\text{and}\quad
	\frac{\theta_n(z)}{\ff{n}{k}}\,\nab^k\MeixnerA n(z).
\end{equation*}
Theorem~\ref{thm:mh-nablak} gives
\begin{equation*}
	\frac{\theta_n(z)}{\ff{n}{k}}\,
	\nab^k\CharlierA n(z)
	\longrightarrow
	(-1)^k\varphi_C(z-k),
	\quad
	\frac{\theta_n(z)}{\ff{n}{k}}\,
	\nab^k\MeixnerA n(z)
	\longrightarrow
	(-1)^k\varphi_M(z-k),
\end{equation*}
locally uniformly in $\C$. In contrast with the forward case, the limiting
profile is translated by $k$ units in the argument. This translation is
visible in the complex-plane geometry itself, since the principal structures
of the colour field and the level curves are displaced accordingly.

Figures~\ref{fig:portraits-backward-k2} and
\ref{fig:portraits-backward-k3} display the corresponding approximation for
$k=2$ and $k=3$. As in the forward case, the finite-degree portraits become
progressively closer to the limiting representation. The differences visible
for $n=10$ decrease at $n=25$, while the $n=50$ portrait reproduces more
closely the spatial distribution of the modulus and the characteristic level
curves of the limiting panel. The figures therefore make both the convergence
and the translation $z\mapsto z-k$ visually apparent.

\begin{figure}[htbp]
	\centering
	\begin{subfigure}{\textwidth}
		\centering
		\includegraphics[width=\linewidth]{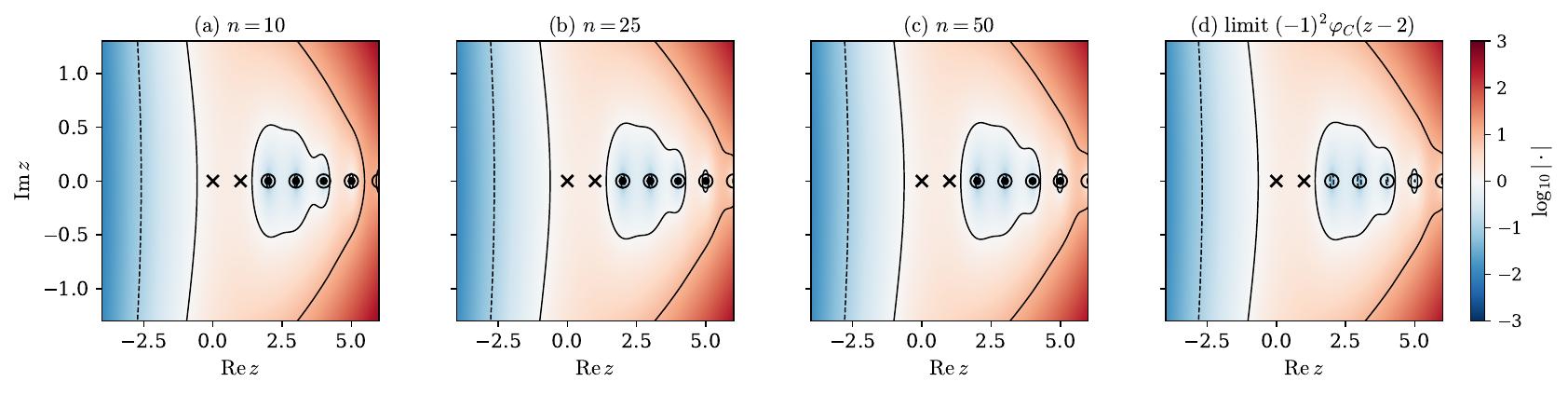}
		\caption{Charlier:
			$\log_{10}\!\left|\dfrac{\theta_n(z)}{\ff{n}{k}}\,
			\nab^{k}\CharlierA n(z)\right|$,
			with limiting profile $(-1)^{k}\varphi_C(z-k)$.}
		\label{fig:portraits-backward-k2-charlier}
	\end{subfigure}\\[1.2ex]
	\begin{subfigure}{\textwidth}
		\centering
		\includegraphics[width=\linewidth]{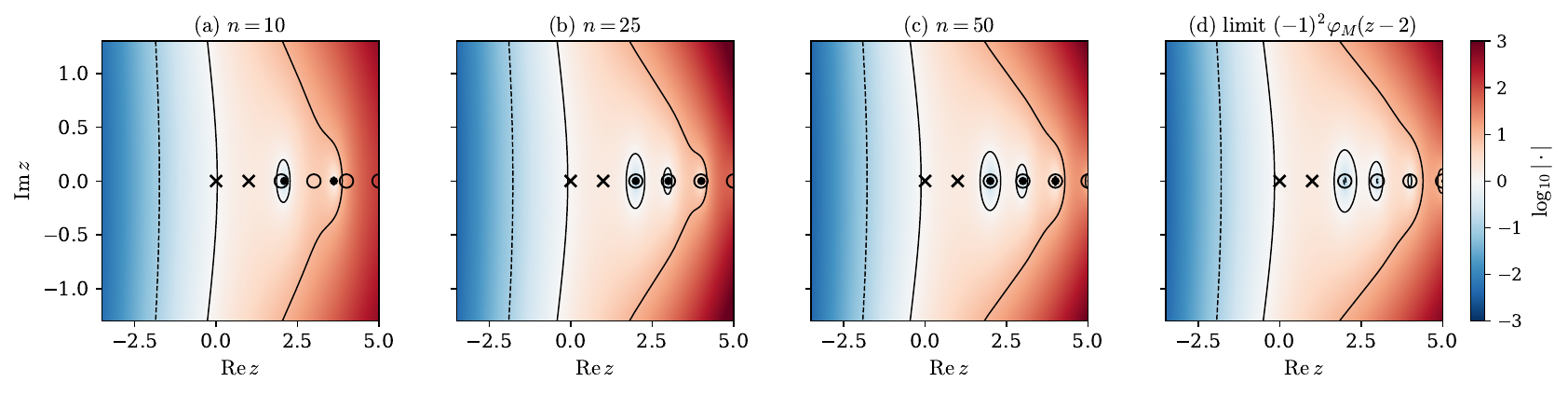}
		\caption{Meixner:
			$\log_{10}\!\left|\dfrac{\theta_n(z)}{\ff{n}{k}}\,
			\nab^{k}\MeixnerA n(z)\right|$,
			with limiting profile $(-1)^{k}\varphi_M(z-k)$.}
		\label{fig:portraits-backward-k2-meixner}
	\end{subfigure}
	\caption{Complex-plane log-modulus portraits of the scaled backward
		differences for $k=2$, with $\mu=0.4$, $A=1.5$, and $\gamma=2.3$ in
		the Meixner case. From left to right, the panels correspond to
		$n=10,25,50$ and to the limiting function from
		Theorem~\ref{thm:mh-nablak}. As the degree increases, the finite-degree
		colour fields and level curves progressively reproduce the translated
		limiting profile displayed in the fourth panel.}
	\label{fig:portraits-backward-k2}
\end{figure}

\begin{figure}[htbp]
	\centering
	\begin{subfigure}{\textwidth}
		\centering
		\includegraphics[width=\linewidth]{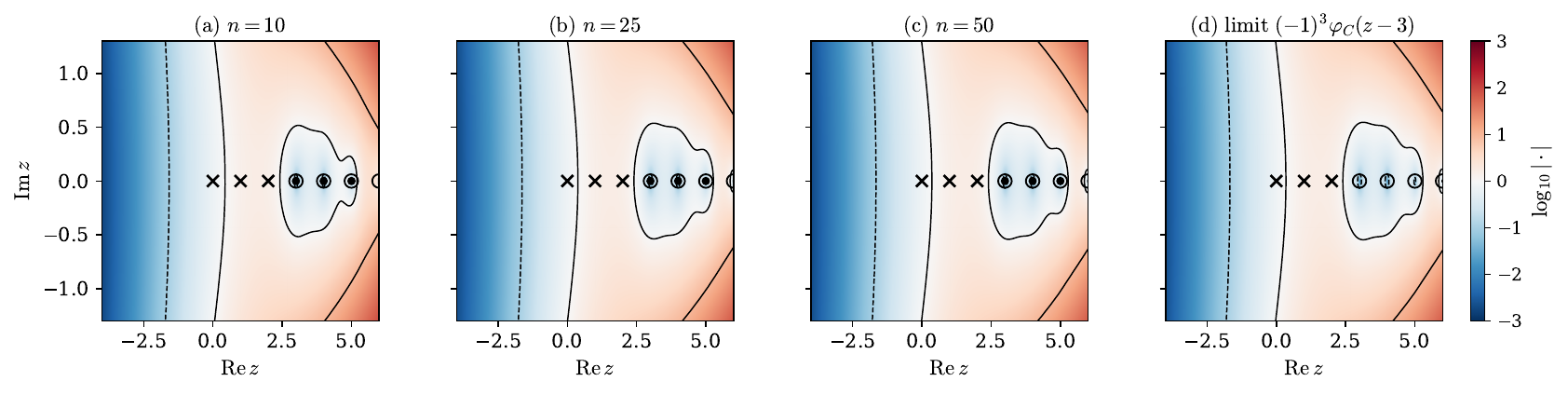}
		\caption{Charlier:
			$\log_{10}\!\left|\dfrac{\theta_n(z)}{\ff{n}{k}}\,
			\nab^{k}\CharlierA n(z)\right|$,
			with limiting profile $(-1)^{k}\varphi_C(z-k)$.}
		\label{fig:portraits-backward-k3-charlier}
	\end{subfigure}\\[1.2ex]
	\begin{subfigure}{\textwidth}
		\centering
		\includegraphics[width=\linewidth]{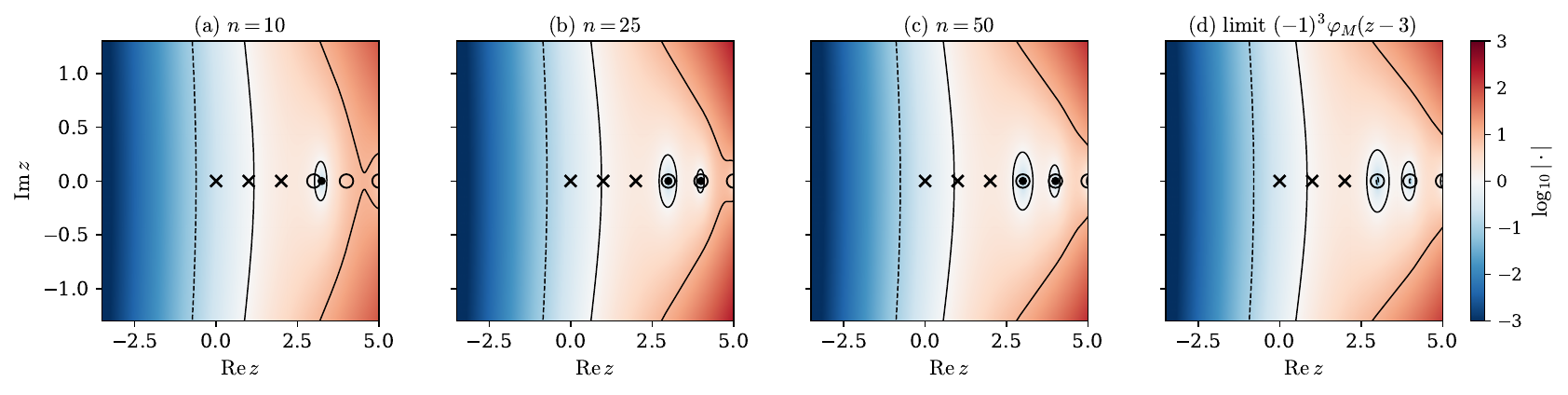}
		\caption{Meixner:
			$\log_{10}\!\left|\dfrac{\theta_n(z)}{\ff{n}{k}}\,
			\nab^{k}\MeixnerA n(z)\right|$,
			with limiting profile $(-1)^{k}\varphi_M(z-k)$.}
		\label{fig:portraits-backward-k3-meixner}
	\end{subfigure}
	\caption{Complex-plane log-modulus portraits of the scaled backward
		differences for $k=3$. The parameters are $\mu=0.4$, $A=1.5$, and
		$\gamma=2.3$ in the Meixner case. The panels correspond to
		$n=10,25,50$ and to the limiting function from
		Theorem~\ref{thm:mh-nablak}. The progressive similarity between the
		finite-degree panels and the fourth panel makes the convergence toward
		the shifted Mehler--Heine profile directly visible.}
	\label{fig:portraits-backward-k3}
\end{figure}

\subsection{Convergence along the real axis}
\label{subsec:realaxis}

The complex-plane portraits are complemented by a direct comparison along the
real axis. Figure~\ref{fig:realaxis} represents the scaled backward differences
from Theorem~\ref{thm:mh-nablak} for $k=3$ and
$n=50,100,150,200,250$. Each coloured curve corresponds to a different
finite degree, whereas the thick black curve represents the corresponding
Mehler--Heine limiting function. Thus, unlike the complex-plane portraits,
where convergence is observed through colour fields and level sets, these
plots allow the vertical separation between the finite-degree functions and
their limit to be seen directly.

For the Charlier family, the curves are compared with
$(-1)^k\varphi_C(z-k)$, whereas for the Meixner family they are compared with
$(-1)^k\varphi_M(z-k)$. We deliberately choose the comparatively large fixed
mass $A=100$ in order to make the finite-degree perturbation clearly visible.
The displayed real intervals are $z\in[-4,7]$ for Charlier (with $\mu=3.5$) and
$z\in[-1.5,6]$ for Meixner (with $\gamma=5$, $\mu=0.4$).
For the smaller values of $n$, the corresponding curves remain visibly
separated from the black limiting profile. As $n$ increases, this separation
decreases and the finite-degree curves follow more closely the shape,
oscillatory behaviour, and principal extrema of the limiting function. The
sequence $n=50,100,150,200,250$ therefore provides a direct one-dimensional
visualization of the same convergence observed in the complex-plane
portraits.

\begin{figure}[htbp]
	\centering
	\begin{subfigure}{0.49\textwidth}
		\centering
		\includegraphics[width=\linewidth]{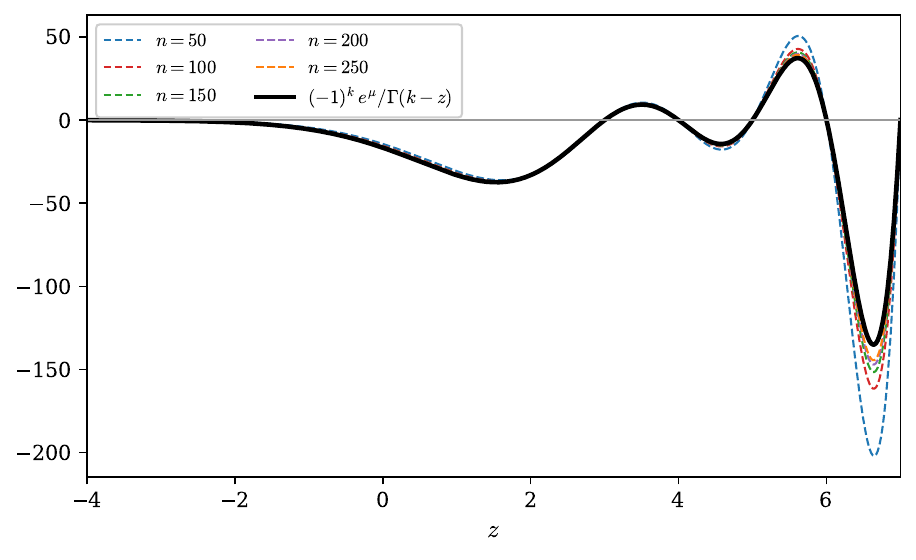}
		\caption{Charlier, $\mu=3.5$, $A=100$; limit
			$(-1)^{k}e^{\mu}/\Gamma(k-z)$.}
		\label{fig:realaxis-charlier}
	\end{subfigure}\hfill
	\begin{subfigure}{0.49\textwidth}
		\centering
		\includegraphics[width=\linewidth]{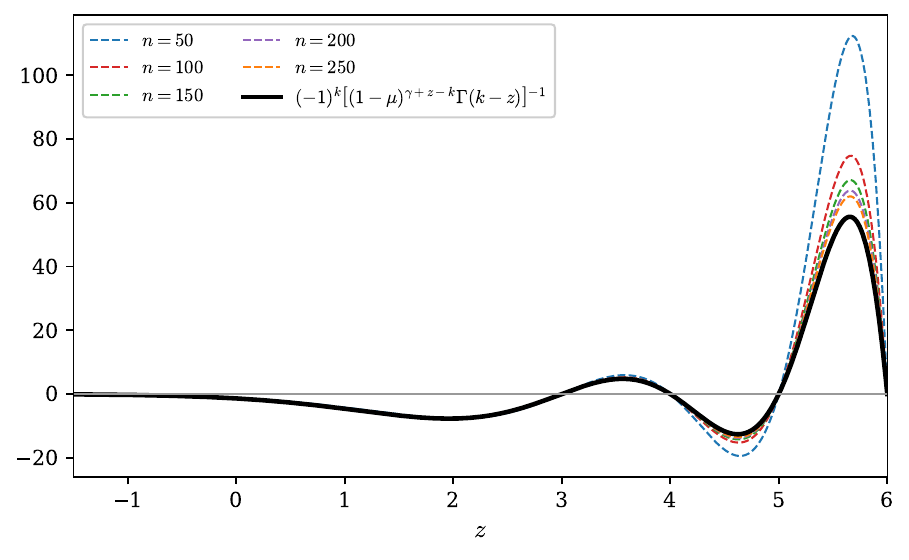}
		\caption{Meixner, $\gamma=5$, $\mu=0.4$, $A=100$; limit
			$(-1)^{k}\big[(1-\mu)^{\gamma+z-k}\Gamma(k-z)\big]^{-1}$.}
		\label{fig:realaxis-meixner}
	\end{subfigure}
	\caption{Real-axis comparison of the scaled backward differences
		$\theta_n(z)\nab^{k}P^{A}_{n}(z)/\ff{n}{k}$ for $k=3$ and
		$n=50,100,150,200,250$ with the limiting functions from
		Theorem~\ref{thm:mh-nablak}. The coloured curves represent the
		finite-degree functions and the thick black curve the corresponding
		limit. As $n$ increases, the finite-degree curves progressively approach
		the limiting profile throughout the displayed interval.}
	\label{fig:realaxis}
\end{figure}

Taken together, the complex-plane and real-axis representations provide a
direct numerical visualization of Theorems~\ref{thm:mh-diff} and
\ref{thm:mh-nablak}. For both the Charlier and Meixner families, increasing
$n$ produces graphical representations that become progressively closer to
the corresponding Mehler--Heine limiting representation. In the complex
plane, this is reflected in the stabilization of the colour fields and level
curves, while along the real axis it appears as the decreasing visual
separation between the finite-degree curves and the limiting profile.
Forward differences preserve the reciprocal-Gamma geometry up to the factor
$(-1)^k$, whereas backward differences additionally exhibit the translation
$z\mapsto z-k$. In both cases, the limiting representation is independent of
the fixed endpoint mass $A$.

\section{Conclusions}
\label{sec:conclu}

We have determined the effect of a fixed endpoint Uvarov mass on the
higher-order finite-difference Mehler--Heine asymptotics of the monic Charlier
and Meixner families. The key mechanism is the rank-one connection formula
\eqref{eq:perturb}. The corresponding perturbation coefficients $D_n$ and
$B_n$ exhibit factorial decay in the Charlier case and exponential decay with
an algebraic prefactor in the Meixner case, as described in
\eqref{eq:Dnasy} and \eqref{eq:Bnasy}. In both families this decay dominates
the algebraic factors generated by any fixed number of finite differences.
Consequently, for every fixed $A\geq0$ and $k\in\Nzero$, the mass-modified
and classical families have the same locally uniform Mehler--Heine limits in
$\C$. The result concerns fixed $A$ and fixed $k$; no uniformity with respect
to regimes such as $A=A_n$ or $k=k_n$ is asserted.

The direction of differencing nevertheless remains visible in the limiting
structure. Equations~\eqref{eq:mhdiffC}--\eqref{eq:mhdiffM} show that forward
differences preserve the reciprocal-Gamma profile, up to the factor
$(-1)^k$. By contrast, the backward formulas
\eqref{eq:mhnablakC}--\eqref{eq:mhnablakM} require the additional
normalisation by $\ff{n}{k}$ and translate the limiting argument according to
$z\mapsto z-k$. Thus, although the fixed endpoint mass disappears from the
limit, the choice of the finite-difference operator continues to determine
the geometry of the resulting Mehler--Heine profile.

The first-order shift equations \eqref{eq:reducedC} and
\eqref{eq:reducedM} provide a complementary description of the forward
limits. Their entire solutions are reciprocal-Gamma factors multiplied by
one-periodic entire functions, while the Mehler--Heine normalisation selects
the constant periodic factors corresponding to the Charlier and Meixner
families. This characterization isolates a functional structure shared by the
classical and mass-modified systems and clarifies why the fixed forward
difference order appears in the limit only through the factor $(-1)^k$.

The endpoint character of the perturbation is also relevant. Point-mass
modifications at $x=0$ may substantially alter the finite-degree Charlier and
Meixner families \cite{alvareznodarse1995}, whereas exterior Sobolev
perturbations can retain the perturbation point explicitly in the
Mehler--Heine limit \cite{SoriaMichel2026}. For the pure endpoint Uvarov
modification considered here, however, the fixed mass parameter leaves no
trace in any of the fixed-order limiting formulas. This comparison indicates
that the local asymptotic behaviour depends not only on the presence of a
perturbation, but also on its type and location.

The numerical illustrations provide a direct visualization of the analytical
results. In the complex-plane portraits, the representations corresponding to
$n=10$, $n=25$, and $n=50$ become progressively closer to the limiting
representation predicted by the corresponding Mehler--Heine formula. This
approach is visible in the stabilization of the colour fields and of the
associated level-set geometry for both the Charlier and Meixner families and
for both forward and backward differences. The real-axis plots provide the
same comparison for $n=50,100,150,200,250$: as the degree increases, the
finite-degree curves increasingly reproduce the shape of the limiting
function. The use of the comparatively large fixed mass $A=100$ in these
plots further illustrates that the finite-degree effect of the perturbation
may remain visible while its contribution disappears from the asymptotic
limit.

Taken together, the analytical and graphical results reveal a clear
scale-separation mechanism. The endpoint mass modifies the finite-degree
polynomials, but its contribution is suppressed at the Mehler--Heine scale by
the rapid decay of the perturbation coefficients. At the same time, the
finite-difference operator leaves a persistent signature: forward differences
retain the original reciprocal-Gamma geometry, whereas backward differences
produce the translated profile associated with $z\mapsto z-k$.

Several natural extensions remain open. Quantitative remainder estimates
would strengthen the locally uniform convergence by providing explicit error
bounds. Regimes with $A=A_n$ or $k=k_n$ could identify thresholds beyond which
the scale separation established here no longer holds. Multiple mass points,
mixed endpoint and exterior perturbations, and analogous questions for other
discrete or $q$-orthogonal families also provide natural directions for
further study.

%%%%%%%%%%%%%%%%%%%%%%%%%%%%%%%%%%%%%%%%%%%%%%%%%%%%%%%%%%%%%%%%%%%%%%%%%%%%%%%%%%%%%%%%%%%%%%%%%%%%%%%

%%%%%%%%%%%%%%%%%%%%%%%%%%%%%%%%%%%%%%%%%%%%%%%%%%%%%%%%%%%%%%%%%%%%%%%%%%%%%%%%%%%%%%%%%%%%%%%%%%%%%%%

%%%%%%%%%%%%%%%%%%%%%%%%%%%%%%%%%%%%%%%%%%%%%%%%%%%%%%%%%%%%%%%%%%%%%%%%%%%%%%%%%%%%%%%%%%%%%%%%%%%%%%%
%%%%%%%%%%%%%%%%%%%%%%%%%%%%%%%%%%%%%%%%%%%%%%%%%%%%%%%%%%%%%%%%%%%%%%%%%%%%%%%%%%%%%%%%%%%%%%%%%%%%%%%

%%%%%%%%%%%%%%%%%%%%%%%%%%%%%%%%%%%%%%%%%%%%%%%%%%%%%%%%%%%%%%%%%%%%%%%%%%%%%%%%%%%%%%%%%%%%%%%%%%%%%%%

\section*{Acknowledgements}

The authors would like to thank the Department of Quantitative Methods at Universidad Loyola Andalusia for providing an excellent research environment and institutional support during the development of this work.

\section*{Declarations}

\subsection*{Ethical Approval}
Not applicable.

\subsection*{Consent to Participate}
Not applicable.

\subsection*{Consent to Publish}
Not applicable.

\subsection*{Data Availability Statement}
No external datasets were used in this study. The source code used to generate
all the figures in Section~\ref{sec:GraphicalMHF}, together with the corresponding
parameter values and supplementary numerical tables, is available in the
Supplementary Material accompanying this article.

\subsection*{Author Contributions}
Conceptualization, A.S.-L.; methodology, A.S.-L. and J.-M.; software, A.S.-L. and J.-M.; validation, A.S.-L. and J.-M.; formal analysis, A.S.-L. and J.-M.; investigation, A.S.-L. and J.-M.; resources, A.S.-L.; data curation, A.S.-L. and J.-M.; writing--original draft preparation, A.S.-L.; writing--review and editing, A.S.-L. and J.-M.; visualization, A.S.-L.; supervision, A.S.-L.; project administration, A.S.-L. All authors have read and agreed to the published version of the manuscript.

\subsection*{Funding}
This research received no external funding.

\subsection*{Competing Interests}
The authors declare that they have no competing interests.

%%%%%%%%%%%%%%%%%%%%%%%%%%%%%%%%%%%%%%%%%%%%%%%%%%%%%%%%%%%%%%%%%%%%%%%%%%%%%%%%%%%%%%%%%%%%%%%%%%%%%%%

%%%%%%%%%%%%%%%%%%%%%%%%%%%%%%%%%%%%%%%%%%%%%%%%%%%%%%%%%%%%%%%%%%%%%%%%%%%%%%%%%%%%%%%%%%%%%%%%%%%%%%%


\begin{thebibliography}{99}

\bibitem{almarero}
M.~Alfaro, F.~Marcell\'an, M.~L.~Rezola, and A.~Ronveaux.
\newblock On orthogonal polynomials of Sobolev type: algebraic properties and zeros.
\newblock {\em SIAM Journal on Mathematical Analysis}, 23(3):737--757, 1992.
\newblock \href{https://doi.org/10.1137/0523038}{\path{doi:10.1137/0523038}}.

\bibitem{alvareznodarse1995}
R.~\'Alvarez-Nodarse, A.~G.~Garc\'ia, and F.~Marcell\'an.
\newblock On the properties for modifications of classical orthogonal polynomials of discrete variables.
\newblock {\em Journal of Computational and Applied Mathematics}, 65:3--18, 1995.
\newblock \href{https://doi.org/10.1016/0377-0427(95)00097-6}{\path{doi:10.1016/0377-0427(95)00097-6}}.

\bibitem{agm}
I.~\'Area, E.~Godoy, and F.~Marcell\'an.
\newblock Inner products involving differences: the Meixner--Sobolev polynomials.
\newblock {\em Journal of Difference Equations and Applications}, 6(1):1--31, 2000.
\newblock \href{https://doi.org/10.1080/10236190008808211}{\path{doi:10.1080/10236190008808211}}.

\bibitem{agmmb}
I.~\'Area, E.~Godoy, F.~Marcell\'an, and J.~J.~Moreno-Balc\'azar.
\newblock Ratio and Plancherel--Rotach asymptotics for Meixner--Sobolev orthogonal polynomials.
\newblock {\em Journal of Computational and Applied Mathematics}, 116:63--75, 2000.
\newblock \href{https://doi.org/10.1016/S0377-0427(99)00281-2}{\path{doi:10.1016/S0377-0427(99)00281-2}}.

\bibitem{B1995}
H.~Bavinck.
\newblock On polynomials orthogonal with respect to an inner product involving differences.
\newblock {\em Journal of Computational and Applied Mathematics}, 57:17--27, 1995.
\newblock \href{https://doi.org/10.1016/0377-0427(93)E0231-A}{\path{doi:10.1016/0377-0427(93)E0231-A}}.

\bibitem{B1995gen}
H.~Bavinck.
\newblock On polynomials orthogonal with respect to an inner product involving differences (the general case).
\newblock {\em Applicable Analysis}, 59:233--240, 1995.
\newblock \href{https://doi.org/10.1080/00036819508840402}{\path{doi:10.1080/00036819508840402}}.

\bibitem{B1996}
H.~Bavinck.
\newblock A difference operator of infinite order with the Sobolev-type Charlier polynomials as eigenfunctions.
\newblock {\em Indagationes Mathematicae (N.S.)}, 7(3):281--291, 1996.
\newblock \href{https://doi.org/10.1016/0019-3577(96)83721-9}{\path{doi:10.1016/0019-3577(96)83721-9}}.

\bibitem{Costas-2022}
R.~S.~Costas-Santos, A.~Soria-Lorente, and J.-M.~Vilaire.
\newblock On polynomials orthogonal with respect to an inner product involving higher-order differences: the Meixner case.
\newblock {\em Mathematics}, 10(11):1952, 2022.
\newblock \href{https://doi.org/10.3390/math10111952}{\path{doi:10.3390/math10111952}}.

\bibitem{Dominici2015}
D.~Dominici.
\newblock Mehler--Heine type formulas for Charlier and Meixner polynomials.
\newblock {\em Ramanujan Journal}, 39:271--289, 2016.
\newblock \href{https://doi.org/10.1007/s11139-014-9665-5}{\path{doi:10.1007/s11139-014-9665-5}}.

\bibitem{Dominici2018}
D.~Dominici.
\newblock Mehler--Heine type formulas for Charlier and Meixner polynomials II. Higher order terms.
\newblock {\em Journal of Classical Analysis}, 12(1):9--13, 2018.
\newblock \href{https://doi.org/10.7153/jca-2018-12-02}{\path{doi:10.7153/jca-2018-12-02}}.

\bibitem{domo}
D.~Dominici and J.~J.~Moreno-Balc\'azar.
\newblock Asymptotic analysis of a family of Sobolev orthogonal polynomials related to the generalized Charlier polynomials.
\newblock {\em Journal of Approximation Theory}, 293:105918, 2023.
\newblock \href{https://doi.org/10.1016/j.jat.2023.105918}{\path{doi:10.1016/j.jat.2023.105918}}.

\bibitem{gamamo}
G.~Filipuk, J.~F.~Ma\~nas-Ma\~nas, J.~J.~Moreno-Balc\'azar, and C.~Rodr\'iguez-Perales.
\newblock Second--order difference equation for quasi--orthogonal polynomials related to Hahn difference operator.
\newblock {\em Journal of Mathematical Analysis and Applications}, 557(1):130268, 2026.
\newblock \href{https://doi.org/10.1016/j.jmaa.2025.130268}{\path{doi:10.1016/j.jmaa.2025.130268}}.

\bibitem{HS2019}
E.~J.~Huertas and A.~Soria-Lorente.
\newblock New analytic properties of nonstandard Sobolev-type Charlier orthogonal polynomials.
\newblock {\em Numerical Algorithms}, 82:41--68, 2019.
\newblock \href{https://doi.org/10.1007/s11075-018-0593-0}{\path{doi:10.1007/s11075-018-0593-0}}.

\bibitem{Ismail05}
M.~E.~H.~Ismail.
\newblock {\em Classical and Quantum Orthogonal Polynomials in One Variable}.
\newblock Encyclopedia of Mathematics and its Applications, Vol.~98.
\newblock Cambridge University Press, Cambridge, 2005.
\newblock \href{https://doi.org/10.1017/CBO9781107325982}{\path{doi:10.1017/CBO9781107325982}}.

\bibitem{KLS2010}
R.~Koekoek, P.~A.~Lesky, and R.~F.~Swarttouw.
\newblock {\em Hypergeometric Orthogonal Polynomials and Their $q$-Analogues}.
\newblock Springer Monographs in Mathematics.
\newblock Springer, Berlin, 2010.
\newblock \href{https://doi.org/10.1007/978-3-642-05014-5}{\path{doi:10.1007/978-3-642-05014-5}}.

\bibitem{maxu}
F.~Marcell\'an and Y.~Xu.
\newblock On Sobolev orthogonal polynomials.
\newblock {\em Expositiones Mathematicae}, 33:308--352, 2015.
\newblock \href{https://doi.org/10.1016/j.exmath.2014.10.002}{\path{doi:10.1016/j.exmath.2014.10.002}}.

\bibitem{MR1990}
F.~Marcell\'an and A.~Ronveaux.
\newblock On a class of polynomials orthogonal with respect to a discrete Sobolev inner product.
\newblock {\em Indagationes Mathematicae (N.S.)}, 1(4):451--464, 1990.
\newblock \href{https://doi.org/10.1016/0019-3577(90)90013-D}{\path{doi:10.1016/0019-3577(90)90013-D}}.

\bibitem{mbDMS}
J.~J.~Moreno-Balc\'azar.
\newblock $\Delta$-Meixner--Sobolev orthogonal polynomials: Mehler--Heine type formula and zeros.
\newblock {\em Journal of Computational and Applied Mathematics}, 284:228--234, 2015.
\newblock \href{https://doi.org/10.1016/j.cam.2014.11.018}{\path{doi:10.1016/j.cam.2014.11.018}}.

\bibitem{mbtpmp}
J.~J.~Moreno-Balc\'azar, T.~E.~P\'erez, and M.~A.~Pi\~nar.
\newblock A generating function for nonstandard orthogonal polynomials involving differences: the Meixner case.
\newblock {\em Ramanujan Journal}, 25:21--35, 2011.
\newblock \href{https://doi.org/10.1007/s11139-010-9254-1}{\path{doi:10.1007/s11139-010-9254-1}}.

\bibitem{nikiforov1991classical}
A.~F.~Nikiforov, V.~B.~Uvarov  and S.~K.~Suslov. 
\newblock {\em Classical Orthogonal Polynomials of a Discrete Variable}.
\newblock Springer Series in Computational Physics.
\newblock Springer, Berlin, 1991.
\newblock \href{https://doi.org/10.1007/978-3-642-74748-9}{\path{doi:10.1007/978-3-642-74748-9}}.

\bibitem{Rebocho2022}
M.~N.~Rebocho.
\newblock On the second-order holonomic equation for Sobolev-type orthogonal polynomials.
\newblock {\em Applicable Analysis}, 101(1):314--336, 2022.
\newblock \href{https://doi.org/10.1080/00036811.2020.1742881}{\path{doi:10.1080/00036811.2020.1742881}}.

\bibitem{khol}
R.~Koekoek.
\newblock {\em Generalizations of the Classical Laguerre Polynomials and Some $q$-Analogues}.
\newblock Ph.D. thesis, Delft University of Technology, 1990.
\newblock \href{https://resolver.tudelft.nl/uuid:5545d761-01b1-4f0f-8e55-3956da27d878}
{\path{https://resolver.tudelft.nl/uuid:5545d761-01b1-4f0f-8e55-3956da27d878}}.

\bibitem{mapepi}
F.~Marcell\'an, M.~Alfaro, and M.~L.~Rezola.
\newblock Orthogonal polynomials on Sobolev spaces: old and new directions.
\newblock {\em Journal of Computational and Applied Mathematics}, 48(1--2):113--131, 1993.
\newblock \href{https://doi.org/10.1016/0377-0427(93)90318-6}{\path{doi:10.1016/0377-0427(93)90318-6}}.

\bibitem{ANbook}
R.~\'Alvarez-Nodarse.
\newblock {\em Polinomios hipergeom\'etricos cl\'asicos y $q$-polinomios},
volume~26 of {\em Monograf\'ias del Seminario Matem\'atico Garc\'ia de Galdeano}.
\newblock Prensas Universitarias de Zaragoza, Zaragoza, 2003.

\bibitem{SoriaMichel2026}
A.~Soria-Lorente and J.~Michel.
\newblock On generating functions and Mehler--Heine formulas for discrete Charlier and Meixner Sobolev-type orthogonal polynomials.
\newblock arXiv:2607.12889v1 [math.CA], 2026.
\newblock \href{https://arxiv.org/abs/2607.12889}{\path{arXiv:2607.12889}}.

\end{thebibliography}
\end{document}